
\newif\ifpictures
\picturestrue

\newif\ifcomment
\commenttrue

\documentclass[11pt]{amsart}
\usepackage{amsmath}

\DeclareMathOperator*{\argmin}{argmin}
\usepackage{amsfonts}
\usepackage{amssymb}
\usepackage{latex_base}
\usepackage{macros}
\usepackage{times}
\usepackage{bm}
\usepackage{natbib}
\usepackage{graphicx}
\usepackage{subfigure}
\usepackage{tikz}
\usepackage{pgf-pie} 
\usepackage{etex}
\usepackage{pgfplots}
\usepackage{diagbox}
\usepackage{multirow}
\usepackage{float}
\usepackage{url}

\makeatletter
\newif\if@restonecol
\makeatother

\usepackage[ruled,linesnumbered,vlined]{algorithm2e}

\author{Weiyi Ding}
\address{Weiyi Ding, School of Mathematical Sciences, Beihang University, Beijing, China}
\email{sy2009152@buaa.edu.cn}

\author{Xiaoxian Tang}
\address{Xiaoxian Tang, School of Mathematical Sciences, Beihang University, Beijing, China}
\email{xiaoxian@buaa.edu.cn}



\title[]{Projections of Tropical Fermat-Weber points}

\begin{document}
\title[]{Projections of Tropical Fermat-Weber points}

\begin{abstract}
In the tropical projective torus, it is not guaranteed that the projection of a Fermat-Weber point of a {given} data set is a Fermat-Weber point of the projection of the data set. In this paper, we focus on the projection on the tropical triangle (the three-point tropical convex hull), and {we} develop one algorithm (Algorithm \ref{algo:MofE}) and its improved version (Algorithm \ref{algo:NewMethod}), such that for a given data set {in the tropical projective torus}, these algorithms output a tropical triangle, on which the projection of a Fermat-Weber point of {the} data set is a Fermat-Weber point of the projection of {the} data set. We implement these algorithms in ${\tt R}$ and test how it works with random data sets. The experimental results show that, these algorithms can succeed with a much higher {probability} than choosing the tropical triangle randomly, the succeed rate of these two algorithms is stable while data sets are changing randomly, and Algorithm \ref{algo:NewMethod} can output the results much faster than Algorithm \ref{algo:MofE} averagely.
\end{abstract}

 \maketitle

\section{Introduction}
In this paper, we study {the} question: for a given data set $X$ in the tropical projective torus, how to find a tropical polytope ${\mathcal C}$, such that the projection of a Fermat-Weber point of $X$ on ${\mathcal C}$ is a Fermat-Weber point of the projection of $X$ on ${\mathcal C}$.

This problem is motivated by the tropical principal component analysis (tropical PCA) {proposed in} \cite{page2019tropical,yoshida2019tropical}, which is of great use in the analysis of phylogenetic trees in Phylogenetics. Phylogenetics is a subject that is very powerful for explaining genome evolution, processes of speciation and relationships among species. It offers a great challenge of analysing data sets that consist of phylogenetic trees.

Analysing data sets of phylogenetic trees with a fixed number of leaves is difficult because the space of phylogenetic trees is high dimensional and not Euclidean; it is a union of lower dimensional polyhedra cones in ${\mathbb R}^{\binom{n}{2}}$, where $n$ is the number of leaves \cite{page2019tropical}. Many multivariate statistical procedures have been applied to such data sets \cite{weyenberg2014kdetrees,gori2016clustering,hillis2005analysis,duchene2018analysis,knowles2018matter,yoshida2019multilocus}. People also have done a lot of work to apply PCA on data sets that consist of phylogenetic trees. For instance, Nye showed an algorithm \cite{nye2011principal} to compute the first order principal component over the space of phylogenetic trees. Nye \cite{nye2011principal} used a two-point convex hull under the CAT(0)-metric as the first order principal component over the Billera-Holmes-Vogtman (BHV) tree space {introduced in} \cite{billera2001geometry}. However, Lin et al. \cite{lin2017convexity} showed that the three-point convex hull {in the BHV tree space} can have arbitrarily high dimension, which means that the idea in \cite{nye2011principal} cannot be generalized to higher order principal components ({e.g., see} \cite{page2019tropical}). In addition, Nye et al. \cite{nye2017principal} used the locus of the weighted Fr\'echet mean when the weights vary over the $k$-simplex as the $k$-th principal component {in} the BHV tree space, and this approach performed well in simulation studies.

On the other hand, the tropical metric in tree spaces is well-studied \cite[{Chapter 5}]{maclagan2015introduction} and well-behaved \cite{lin2017convexity}. In 2019, Yoshida et al. \cite{yoshida2019tropical} defined the tropical PCA under the tropical metric in two ways: the Stiefel tropical linear space of fixed dimension, and the tropical polytope with a fixed number of vertices. Page et al. \cite{page2019tropical} used tropical polytopes for tropical PCA to visualize data sets of phylogenetic trees, and used Markov Chain Monte Carlo (MCMC) approach to optimally estimate the tropical PCA. Their experimental results \cite{page2019tropical} showed that, this {MCMC} method of computing tropical PCA performed well on both simulated data sets and empirical data sets.

This paper is motivated by a difference between classical PCA (in Euclidean spaces) and tropical PCA as follows. In classical PCA, the projection of the mean point of a data set $X$ in {the Euclidean space} is the mean point of the projection of $X$ (e.g., {see} \cite[Page 188]{zaki2014data}). However, in tropical PCA defined by tropical polytopes, the projection of a tropical mean point (in this paper we call it a Fermat-Weber point) of a data set $X$ is not necessarily a Fermat-Weber point of the projection of $X$ (see Example \ref{ex:NotKeepWithoutFWP}). {More specifically}, it is known that, for a data set $X$ {in the Euclidean space}, the mean point of $X$ is unique. However, for a data set $X$ in the tropical projective torus ({denoted by} ${\mathbb R}^{n}\!/{\mathbb R}{\mathbf 1}$), the Fermat-Weber point of $X$ is not necessarily unique \cite[Proposition 20]{yoshida2020tropical}. For a data set $X\subset{\mathbb R}^{n}\!/{\mathbb R}{\mathbf 1}$ and a tropical convex hull ${\mathcal C}$, the tropical projection \cite[Formula 3.3]{kang2019unsupervised} of the set of Fermat-Weber points of $X$ on ${\mathcal C}$ are not exactly equal to the set of Fermat-Weber points of the projection of $X$ on ${\mathcal C}$. In addition, it is also known that, in ${\mathbb R}^{n}\!/{\mathbb R}{\mathbf 1}$, if a set is {the} union of $X$ and a Fermat-Weber point of $X$, then {the} union has exactly one Fermat-Weber point \cite[Lemma 8]{lin2018tropical}. So a natural question is, if a set is {the} union of $X$ and a Fermat-Weber point of $X$, can the projection of the Fermat-Weber point of the union be a Fermat-Weber point of the projection of {the} union? By experiments we know that this is {still} not guaranteed, and {it} depends on the choice of the tropical convex hull ${\mathcal C}$ (see Example \ref{ex:NotKeepWithFWP}).

In this paper, we focus on tropical triangles (three-point tropical polytopes). We develop one algorithm (Algorithm \ref{algo:MofE}) and its improved version (Algorithm \ref{algo:NewMethod}), such that for a given data set $X\subset{\mathbb R}^{n}\!/{\mathbb R}{\mathbf 1}$, these algorithms output a tropical triangle ${\mathcal C}$, on which the projection of a Fermat-Weber point of $X$ is a Fermat-Weber point of the projection of $X$. By sufficient experiments with random data sets, we show that Algorithm \ref{algo:MofE} and Algorithm \ref{algo:NewMethod} can both succeed with a much higher {probability} than choosing a tropical triangle ${\mathcal C}$ randomly (see Table \ref{tab:showMyAlgoIsGood} and Table \ref{tab:goodRateForMNchange}). We also show that the succeed rate of these two algorithms is stable while data sets are changing randomly (see Table \ref{tab:GoodRateVchanges}). Algorithm \ref{algo:NewMethod} can output the result much faster than Algorithm \ref{algo:MofE} does averagely (see Table \ref{tab:runTimeForBoth}), {because in most cases, Algorithm \ref{algo:NewMethod} correctly terminates with less steps than Algorithm \ref{algo:MofE} does (see Figure \ref{fig:VarChangingCompare})}.

This paper is organized as follows. In Section \ref{sec:basic}, we remind readers of the basic definitions in tropical geometry. In Section \ref{sec:projection}, we prove Theorem \ref{theo:algorithmIsRight} and Theorem \ref{lem:cannotBothHold} for the correctness of the algorithms developed in this paper. In Section \ref{sec:algorithm}, we {present Algorithm \ref{algo:MofE} and Algorithm \ref{algo:NewMethod}. We also explain how the algorithms work by two examples}. In Section \ref{sec:experiment}, we apply the algorithms developed in Section \ref{sec:algorithm} on random data sets, and illustrate the experimental results.

\section{Tropical Basics}\label{sec:basic}
In this section, {we set up the notation throughout this paper, and introduce some basic tropical arithmetic and geometry}.
\begin{definition}[\bf Tropical Arithmetic Operations]
We denote by $(\mathbb{R}\cup\{-\infty\},\boxplus,\odot)$ the {\em max-plus tropical semi-ring}. We define the {\em tropical addition} and the {\em tropical multiplication} as :
\begin{equation*}
c\boxplus{d}:=\max\{c,d\},\;\;\;c\odot{d}:=c+d\text{, \;\;where }c, d\in\mathbb{R}\cup\{-\infty\}.
\end{equation*}
\end{definition}

\begin{definition}[\bf Tropical Vector Addition]
For any scalars $c,d\in\mathbb{R}\cup\{-\infty\}$, and for any vectors $${\bf u}=(u_1,\dots,u_n),\;{\bf v}=(v_1,\dots,v_n)\in(\mathbb{R}\cup\{-\infty\})^n,$$ we define the {\em tropical vector addition} as:
\begin{align*}
c\odot {\bf u}\boxplus d\odot {\bf v}&:=(\max\{c+{u}_1,d+{v}_1\},\dots,\max\{c+{u}_n,d+{v}_n\}).
\end{align*}
\end{definition}

\begin{example}
\label{ex:tropicalLinearCombi}
Let
\begin{equation*}
{\bf u}=(2,1,3),\;\;{\bf v}=(2,2,2).
\end{equation*}
Also we let $c=-2,d=1$. Then we have
\begin{equation*}
c\odot {\bf u}\boxplus d\odot {\bf v}=(\max\{-2+2,1+2\},\max\{-2+1,1+2\},\max\{-2+3,1+2\})=(3,3,3).
\end{equation*}
\end{example}

\par For any point ${\bf u}\in{\mathbb R}^n$, we define {\em the equivalence class} $[{\bf u}]:=\{{\bf u}+c\cdot\mathbf{1}|c\in\mathbb{R}\},\;\text{where}\;{\mathbf 1}=(1,\dots,1).$ For instance, the vector $(3,3,3)$ is equivalent to (0,0,0). In the rest of this paper, we consider the {\em tropical projective torus}
\begin{equation*}
\mathbb{R}^n\!/\mathbb{R}\mathbf{1}:=\{[{\bf u}]|{\bf u}\in{\mathbb R}^n\}.
\end{equation*}
For convenience, we simply denote by ${\bf u}$ its equivalence class instead of $[{\bf u}]$, {and} we assume the first coordinate of every point in $\mathbb{R}^n\!/\mathbb{R}\mathbf{1}$ is $0$. Because for any ${\bf u}=(u_1,\dots,u_n)\in\mathbb{R}^n\!/\mathbb{R}\mathbf{1}$, it is equivalent to
\begin{equation}
\label{equ:regulateToZero}
{\bf u}=(0,u_2-u_1,\dots,u_n-u_1).
\end{equation}

\begin{definition}[\bf Tropical Distance]
For any two points
{$${\bf u}=(u_1,\dots,u_n),\;\;{\bf v}=(v_1,\dots,v_n)\in\mathbb{R}^n\!/\mathbb{R}\mathbf{1},$$}
we define the {\em tropical distance} $d_{tr}({\bf u},{\bf v})$ as:
\begin{equation*}
d_{tr}({\bf u},{\bf v}):=\max\{|u_i-v_i-u_j+v_j|:1\leq i<j\leq n\}=\max\limits_{{1\leq i\leq n}}\{u_i-v_i\}-\min\limits_{{1\leq i\leq n}}\{u_i-v_i\}.
\end{equation*}
\end{definition}

Note that the tropical distance is a metric in $\mathbb{R}^n\!/\mathbb{R}\mathbf{1}$ \cite[Page 2030]{lin2017convexity}.

\begin{example}
Let ${\bf u}=(0,4,2),{\bf v}=(0,1,1)\in\mathbb{R}^3\!/\mathbb{R}\mathbf{1}$. The tropical distance between ${\bf u}$, ${\bf v}$ is
\begin{equation*}
d_{tr}({\bf u},{\bf v})=\max\{0,3,1\}-\min\{0,3,1\}=3-0=3.
\end{equation*}
\end{example}

\begin{definition}[\bf Tropical Convex Hull]
Given a finite subset {$$X=\{{\bf x}^{(1)},\dots,{\bf x}^{(t)}\}\subset\mathbb{R}^n\!/\mathbb{R}\mathbf{1},$$} we define the {\em tropical convex hull}  as {the set of all tropical linear combinations of $X$:}
\begin{equation*}
tconv(X):=\{c_1\odot {\bf x}^{(1)}\boxplus c_2\odot {\bf x}^{(2)}\boxplus\dots\boxplus c_t\odot {\bf x}^{(t)}|c_1,\dots,c_t\in\mathbb{R}\}.
\end{equation*}
If $|X|=3,$ then the tropical convex hull of $X$ is called a {\em tropical triangle}.
\end{definition}

\begin{example}
Consider a set $X=\{{\bf x}^{(1)},{\bf x}^{(2)},{\bf x}^{(3)}\}\subset\mathbb{R}^3\!/\mathbb{R}\mathbf{1}$, where
\begin{equation*}
{\bf x}^{(1)}=(0,0,0),\;\;{\bf x}^{(2)}=(0,4,2),\;\;{\bf x}^{(3)}=(0,2,4).
\end{equation*}
The tropical convex hull $tconv(X)$ is shown in Figure \ref{fig:tconv examp}. Note that $\mathbb{R}^3\!/\mathbb{R}\mathbf{1}$ is isomorphic to $\mathbb{R}^2$ \cite{speyer2004tropical}, so the points in Figure \ref{fig:tconv examp} are drawn on a plane.
\end{example}

\begin{figure}[ht]
\centering
\begin{tikzpicture}[scale=0.8]
\draw[->](-1,0)--(5,0);
\draw[->](0,-1)--(0,5);
\draw[blue,very thick,fill=blue](0,0)--(0,2)--(2,4)--(4,4)--(4,2)--(2,0)--(0,0);
\draw[red,fill](0,0) circle [radius=0.05];
\draw[red,fill](2,4) circle [radius=0.05];
\draw[red,fill](4,2) circle [radius=0.05];
\node at (-0.6,-0.3) {(0,0,0)};
\node at (-0.6,2) {(0,0,2)};
\node at (1.4,4.2) {(0,2,4)};
\node at (4.6,4.2) {(0,4,4)};
\node at (4.8,2) {(0,4,2)};
\node at (2,-0.3) {(0,2,0)};
draw[red,fill=red](0,0) circle [radius=0.5];
\end{tikzpicture}
\caption{Blue region is the tropical convex hull of {the set of red points}}
\label{fig:tconv examp}
\end{figure}
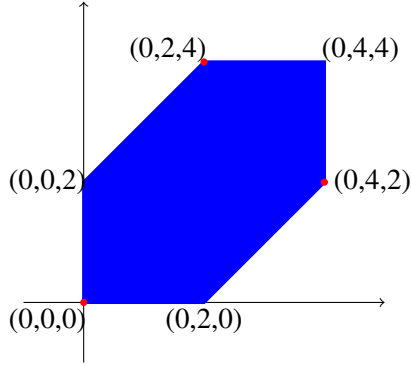

\begin{definition}[\bf Tropical Fermat-Weber Points]
\label{def:FWP}
Suppose we have
$$X=\{{\bf x}^{(1)},\dots,{\bf x}^{(t)}\}\subset\mathbb{R}^n\!/\mathbb{R}\mathbf{1}.$$
We define the set of {\em tropical Fermat-Weber points} of $X$ as
\begin{equation}
\label{equ:defOfFWP}
\mathop{\argmin}_{{\bf y}\in\mathbb{R}^n\!/\mathbb{R}\mathbf{1}}\sum_{i=1}^{t}{d_{tr}({\bf y},{\bf x}^{(i)})}.
\end{equation}
{The Fermat-Weber point of $X$ is denoted by $F_X$.}
\end{definition}

\begin{proposition}\cite[Proposition 25]{lin2017convexity}
\label{prop:FWpointLP}
Given $X=\{{\bf x}^{(1)},\dots,{\bf x}^{(t)}\}\subset\mathbb{R}^n\!/\mathbb{R}\mathbf{1}$, the set of tropical Fermat-Weber points of $X$ in $\mathbb{R}^n\!/\mathbb{R}\mathbf{1}$ is a convex polytope in $\mathbb{R}^{n-1}$. It consists of all optimal solutions ${\bf y}=(y_1,\dots,y_n)$ to the linear programming {problem:}
\begin{equation}\label{equ:FWpointLP}
\begin{split}
\text{mini}&\text{mize}\sum_{i=1}^{t}{\gamma_i},\\
\text{subject to}\;\gamma_i&\geq y_k-x_k^{(i)}-y_\ell+x_\ell^{(i)},\\
\gamma_i&\geq-(y_k-x_k^{(i)}-y_\ell+x_\ell^{(i)}), \\
\text{for all}\;1\leq k<\ell&\leq n,\;\text{and for all}\;i\in\{1,2,\dots,t\}.
\end{split}
\end{equation}
\end{proposition}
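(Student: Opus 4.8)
The plan is to recast the Fermat-Weber minimization as a linear program by the standard epigraph trick, to identify the set of Fermat-Weber points with the image of the optimal face of that program under a coordinate projection (which is automatically a polyhedron), and finally to check boundedness, so that this polyhedron is in fact a polytope in $\mathbb{R}^{n-1}$.

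First I would fix representatives by normalizing $y_1=0$, as in \eqref{equ:regulateToZero}, so that a point of $\mathbb{R}^n\!/\mathbb{R}\mathbf{1}$ is described uniquely by $(y_2,\dots,y_n)\in\mathbb{R}^{n-1}$. Because $d_{tr}$ is unchanged when a multiple of $\mathbf{1}$ is added to either argument, the objective $f(\mathbf{y}):=\sum_{i=1}^{t}d_{tr}(\mathbf{y},\mathbf{x}^{(i)})$ is a well-defined function on this $\mathbb{R}^{n-1}$, and the Fermat-Weber set \eqref{equ:defOfFWP} is the set of its minimizers. By the defining formula for $d_{tr}$, each summand $d_{tr}(\mathbf{y},\mathbf{x}^{(i)})$ is the pointwise maximum of the finitely many affine functions $\pm\bigl(y_k-x_k^{(i)}-y_\ell+x_\ell^{(i)}\bigr)$, $1\le k<\ell\le n$; hence $f$ is convex and piecewise linear.

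Next I introduce auxiliary variables $\gamma_1,\dots,\gamma_t$ and consider the linear program \eqref{equ:FWpointLP}. Its constraints say precisely that $\gamma_i\ge d_{tr}(\mathbf{y},\mathbf{x}^{(i)})$ for each $i$, so for a fixed $\mathbf{y}$ the minimum of $\sum_i\gamma_i$ over feasible $\gamma$ equals $f(\mathbf{y})$, attained at $\gamma_i=d_{tr}(\mathbf{y},\mathbf{x}^{(i)})$. Consequently the optimal value of \eqref{equ:FWpointLP} equals $f^\ast:=\min_{\mathbf{y}}f(\mathbf{y})$, and $(\mathbf{y}^\ast,\gamma^\ast)$ is optimal for \eqref{equ:FWpointLP} if and only if $\mathbf{y}^\ast$ is a Fermat-Weber point of $X$ and $\gamma_i^\ast=d_{tr}(\mathbf{y}^\ast,\mathbf{x}^{(i)})$ for all $i$. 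Thus the Fermat-Weber set is exactly the image of the optimal face of the linear program \eqref{equ:FWpointLP} under the projection that forgets the $\gamma$-coordinates.

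Finally, the optimal face of a linear program is the feasible polyhedron intersected with the hyperplane $\{\sum_i\gamma_i=f^\ast\}$, hence a polyhedron, and its image under a coordinate projection is again a polyhedron (Fourier-Motzkin elimination); so it only remains to show this polyhedron in $\mathbb{R}^{n-1}$ is bounded. For this I would use that $f$ is coercive on the slice $\{y_1=0\}$: since $d_{tr}(\mathbf{y},\mathbf{x}^{(1)})=\max_k\{y_k-x_k^{(1)}\}-\min_k\{y_k-x_k^{(1)}\}$ with $y_1=0$ fixed, a large $|y_j|$ for some $j\ge 2$ forces a large spread on the right-hand side, so $d_{tr}(\mathbf{y},\mathbf{x}^{(1)})\to\infty$ as $\|(y_2,\dots,y_n)\|\to\infty$; hence every sublevel set of $f$ is bounded, $f$ attains its minimum, and $\{\mathbf{y}:f(\mathbf{y})\le f^\ast\}=\{\mathbf{y}:f(\mathbf{y})=f^\ast\}$ is the bounded Fermat-Weber set. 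A bounded polyhedron is a polytope, which is the assertion. I expect the boundedness step to be the only real subtlety: one must use the normalization $y_1=0$ honestly (on $\mathbb{R}^n$ itself the minimizer set is invariant under translation along $\mathbf{1}$, hence unbounded) and must check that $d_{tr}(\cdot,\mathbf{x}^{(1)})$ blows up in every unbounded direction of the normalized slice; everything else is the routine epigraph reformulation together with the standard description of the optimal set of a linear program.
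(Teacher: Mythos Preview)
The paper does not prove this proposition at all; it is quoted verbatim from \cite[Proposition~25]{lin2017convexity} and used as a black box, so there is no argument in the paper to compare against. Your proof is correct and is essentially the standard one: the epigraph reformulation identifies the Fermat-Weber set with the $\mathbf{y}$-projection of the optimal face of the LP, and your coercivity check on the slice $\{y_1=0\}$ cleanly handles boundedness (and, implicitly, existence of a minimizer). Nothing is missing.
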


\begin{definition}[\bf Tropical Projection]
\label{def:tropicalProj}
Let $$U=\{{\bf u}^{(1)}=(u^{(1)}_1,\dots,u^{(1)}_n),\dots,{\bf u}^{(t)}=(u^{(t)}_1,\dots,u^{(t)}_n)\}\subset\mathbb{R}^{n}\!/\mathbb{R}\mathbf{1}.$$ Also let $\mathcal{C}=tconv(U)$. For any point ${\bf x}=(x_1,\dots,x_n)\in\mathbb{R}^{n}\!/\mathbb{R}\mathbf{1}$, we define the {\em projection of ${\bf x}$} on ${\mathcal C}$ as:
\begin{equation}
\label{equ:calculProjection}
\delta_{\mathcal{C}}({\bf x}):=\lambda_1\odot {\bf u}^{(1)}\boxplus\lambda_2\odot {\bf u}^{(2)}\boxplus\dots\boxplus\lambda_t\odot {\bf u}^{(t)},
\end{equation}
where $\lambda_i:=\min\{x_1-u_1^{(i)},\dots,x_n-u_n^{(i)}\}$ for all $i\in\{1,\dots,t\}$ \cite[Formula 3.3]{kang2019unsupervised}.
\end{definition}

\begin{proposition}\cite[Lemma 8]{lin2018tropical}
\label{prop:LinBo}
Let $X=\{{\bf x}^{(1)},\dots,{\bf x}^{(m)}\}\subset\mathbb{R}^{n}\!/\mathbb{R}\mathbf{1}$. Suppose $F_X$ is a Fermat-Weber point of $X$. Then $\{F_X,{\bf x}^{(1)},\dots,{\bf x}^{(m)}\}$ has exactly one Fermat-Weber point, which is $F_X$.
\end{proposition}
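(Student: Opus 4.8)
The plan is to exploit the purely additive structure of the Fermat-Weber objective. Write $f_X(\mathbf{y}):=\sum_{i=1}^{m}d_{tr}(\mathbf{y},\mathbf{x}^{(i)})$ for the objective attached to $X$, and set $Y:=\{F_X,\mathbf{x}^{(1)},\dots,\mathbf{x}^{(m)}\}$ with objective $f_Y(\mathbf{y}):=d_{tr}(\mathbf{y},F_X)+f_X(\mathbf{y})$. The crucial remark is that $f_Y$ is a sum of two nonnegative functions on $\mathbb{R}^n\!/\mathbb{R}\mathbf{1}$: the term $d_{tr}(\cdot,F_X)$, which attains its minimum value $0$ exactly at $F_X$ because $d_{tr}$ is a metric on $\mathbb{R}^n\!/\mathbb{R}\mathbf{1}$; and $f_X$, which attains its minimum at $F_X$ by the standing hypothesis that $F_X$ is a Fermat-Weber point of $X$ (Definition \ref{def:FWP}).

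First I would record $f_Y(F_X)=0+f_X(F_X)=f_X(F_X)$. Then, for an arbitrary $\mathbf{y}\in\mathbb{R}^n\!/\mathbb{R}\mathbf{1}$, I would chain the estimates $f_Y(\mathbf{y})=d_{tr}(\mathbf{y},F_X)+f_X(\mathbf{y})\geq d_{tr}(\mathbf{y},F_X)+f_X(F_X)=d_{tr}(\mathbf{y},F_X)+f_Y(F_X)\geq f_Y(F_X)$, where the first inequality uses $f_X(\mathbf{y})\geq f_X(F_X)$ and the second uses $d_{tr}(\mathbf{y},F_X)\geq 0$. This already shows $F_X$ is a Fermat-Weber point of $Y$. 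For uniqueness I would trace the equality cases: if $f_Y(\mathbf{y})=f_Y(F_X)$, then both inequalities above are equalities, so in particular $d_{tr}(\mathbf{y},F_X)=0$, and since $d_{tr}$ is a metric this forces $\mathbf{y}=F_X$ as points of the tropical projective torus. Hence $F_X$ is the unique Fermat-Weber point of $Y$.

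I do not expect a serious obstacle here; the one point that genuinely needs care is invoking that $d_{tr}$ is a metric on the \emph{quotient} $\mathbb{R}^n\!/\mathbb{R}\mathbf{1}$, so that $d_{tr}(\mathbf{y},F_X)=0$ yields equality of equivalence classes rather than merely of chosen representatives; this is exactly the fact noted after the definition of tropical distance. As an alternative one could argue through the linear program of Proposition \ref{prop:FWpointLP}: any optimal $\mathbf{y}$ for $Y$ must make the block of constraints coming from $F_X$ tight with the corresponding $\gamma=0$, which pins $\mathbf{y}$ to $F_X$ coordinatewise up to the additive constant; but the metric-based argument is shorter and I would present that one.
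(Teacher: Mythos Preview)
Your argument is correct. The paper does not supply its own proof of this proposition; it simply quotes the result from \cite[Lemma 8]{lin2018tropical}, so there is no in-paper proof to compare against. Your additive decomposition $f_Y=d_{tr}(\cdot,F_X)+f_X$, followed by the observation that both summands are simultaneously minimized at $F_X$ and that the first summand vanishes only there, is exactly the natural proof and is complete as written.
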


\subsection{Examples}
\begin{example}
\label{ex:NotKeepWithoutFWP}
This example shows that, for a given data set $X\subset\mathbb{R}^{3}\!/\mathbb{R}\mathbf{1}$ and a given two-point tropical polytope ${\mathcal C}$, the projection of a Fermat-Weber point of $X$ on ${\mathcal C}$ is not necessarily a Fermat-Weber point of the projection of $X$ on ${\mathcal C}$.

Suppose we have $X=\{(0,1,5),(0,2,4),(0,3,1),(0,4,3)\}\subset\mathbb{R}^{3}\!/\mathbb{R}\mathbf{1}.$ By solving the linear programming \eqref{equ:FWpointLP} in Proposition \ref{prop:FWpointLP} (e.g., using {\tt lpSolve} in {\tt R}), {we obtain that, $(0,3,3)$ is a Fermat-Weber point of $X$}. Let ${\mathcal C}=tconv(\{(0,0,2),(0,\frac{33}{10},2)\}).$ Then the projection of $X$ on ${\mathcal C}$ is $P=\{(0,1,2),(0,2,2),(0,\frac{33}{10},2)\}.$

We remark that, in $P$, {$(0,1,2)$ is the projection of $(0,1,5)$, $(0,2,2)$ is the projection of $(0,2,4)$, and $(0,\frac{33}{10},2)$ is the projection of both $(0,3,1)$ and $(0,4,3)$ on ${\mathcal C}$.}

{Note that $(0,2,2)$ is the unique Fermat-Weber point of $P$, while the projection of a Fermat-Weber point $(0,3,3)$ of $X$ is $(0,3,2)$.} So we can see that {the projection of a Fermat-Weber point of $X$ on ${\mathcal C}$ is not a Fermat-Weber point of the projection} (see Figure \ref{fig:NotKeepWithoutFWP}).
\end{example}

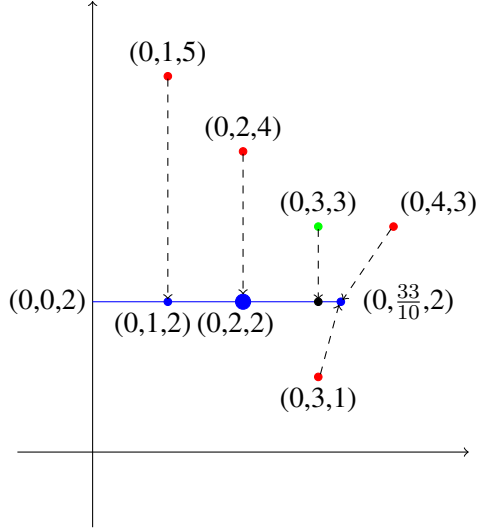
\begin{figure}[H]
\centering
\begin{tikzpicture}[]
\draw[->](-1,0)--(5,0);
\draw[->](0,-1)--(0,6);
\draw[red,fill](1,5) circle [radius=0.05];
\draw[red,fill](2,4) circle [radius=0.05];
\draw[red,fill](3,1) circle [radius=0.05];
\draw[red,fill](4,3) circle [radius=0.05];
\draw[green,fill](3,3) circle [radius=0.05];
\draw[blue](0,2)--(3.3,2);
\draw[blue,fill](1,2) circle [radius=0.05];
\draw[blue,fill](2,2) circle [radius=0.1];
\draw[black,fill](3,2) circle [radius=0.05];
\draw[blue,fill](3.3,2) circle [radius=0.05];
\draw[dashed,->](1,4.968)--(1,2.035);
\draw[dashed,->](2,3.968)--(2,2.1);
\draw[dashed,->](3,2.968)--(3,2.032);
\draw[dashed,->](3.028,1.035)--(3.275,1.96);
\draw[dashed,->](3.968,2.968)--(3.325,2.025);
\node at (-0.6,2) {(0,0,2)};
\node at (1,5.3) {(0,1,5)};
\node at (2,4.3) {(0,2,4)};
\node at (3,3.3) {(0,3,3)};
\node at (4.6,3.3) {(0,4,3)};
\node at (3,0.7) {(0,3,1)};
\node at (4.2,2) {(0,$\frac{33}{10}$,2)};
\node at (1.9,1.7) {(0,2,2)};
\node at (0.8,1.7) {(0,1,2)};
draw[red,fill=red](0,0) circle [radius=0.5];
\end{tikzpicture}
\caption{{\bf1.} Red points are points in $X$. The green point is a Fermat-Weber point of $X$.\protect\\
{\bf2.} The blue line segment is the tropical convex hull ${\mathcal C}$ generated by $(0,0,2)$ and $(0,\frac{33}{10},2)$.\protect\\
{\bf3.} Blue points are the projection $P$ of $X$. And the biggest blue one $(0,2,2)$ is the Fermat-Weber point of $P$. The black point $(0,3,2)$ is the projection of the green point.\protect\\
}
\label{fig:NotKeepWithoutFWP}
\end{figure}

\begin{example}
\label{ex:NotKeepWithFWP}
This example shows that, in $\mathbb{R}^{n}\!/\mathbb{R}\mathbf{1}$, if a set ${\widetilde X}$ is {the} union of $X$ and a Fermat-Weber point $F_X$ of $X$, then it is not guaranteed that the projection of the Fermat-Weber point $F_X$ of ${\widetilde X}$ is a Fermat-Weber point of the projection of ${\widetilde X}$. Besides, whether the projection of the Fermat-Weber point $F_X$ of ${\widetilde X}$ is a Fermat-Weber point of the projection of ${\widetilde X}$ depends on the choice of the tropical convex hull ${\mathcal C}$.

Suppose we have $$X=\{(0,1,5),(0,2,4),(0,3,1),(0,4,3)\}\subset\mathbb{R}^{3}\!/\mathbb{R}\mathbf{1}.$$ By solving the linear programming \eqref{equ:FWpointLP} in Proposition \ref{prop:FWpointLP}, we obtain that, $(0,3,3)$ is a Fermat-Weber point of $X$. Then $${\widetilde X}=X\cup \{F_X\}=\{(0,1,5),(0,2,4),(0,3,1),(0,4,3),(0,3,3)\}.$$

Let ${\mathcal C}_1=tconv(\{(0,0,2),(0,\frac{5}{2},2)\}),\;\;{\mathcal C}_2=tconv(\{(0,0,2),(0,4,2)\}).$ $P_1$ and $P_2$ are the projection of ${\widetilde X}$ on ${\mathcal C}_1$ and ${\mathcal C}_2$ respectively, where $$P_1=\{(0,1,2),(0,2,2),(0,\frac{5}{2},2)\},\;\;P_2=\{(0,1,2),(0,2,2),(0,3,2),(0,4,2)\}.$$

{We remark that, in $P_1$, $(0,1,2)$ is the projection of $(0,1,5)$, $(0,2,2)$ is the projection of $(0,2,4)$, and $(0,\frac{5}{2},2)$ is the projection of $(0,3,1)$, $(0,4,3)$ and $(0,3,3)$ on ${\mathcal C}_1$. And in $P_2$, $(0,1,2)$ is the projection of $(0,1,5)$, $(0,2,2)$ is the projection of $(0,2,4)$, $(0,3,2)$ is the projection of $(0,3,3)$, and $(0,4,2)$ is the projection of $(0,3,1)$ and $(0,4,3)$ on ${\mathcal C}_2$.}

{Note that $(0,2,2)$ is the unique Fermat-Weber point of $P_1$, while the projection of the Fermat-Weber point $(0,3,3)$ of ${\widetilde X}$ on ${\mathcal C}_1$ is $(0,\frac{5}{2},2)$. So we can see that the projection of the Fermat-Weber point of ${\widetilde X}$ on ${\mathcal C}_1$ is not a Fermat-Weber point of the projection.} On the other hand, the projection of the Fermat-Weber point $(0,3,3)$ on ${\mathcal C}_2$ is $(0,3,2)$, which is {exactly} a Fermat-Weber point of the projection (see Figure \ref{fig:NotKeepWithFWP}).
\end{example}

\begin{figure}[ht]
\centering
\subfigure[]{
\begin{tikzpicture}[scale=0.9]
\draw[->](-1,0)--(5,0);
\draw[->](0,-1)--(0,6);
\draw[red,fill](1,5) circle [radius=0.05];
\draw[red,fill](2,4) circle [radius=0.05];
\draw[red,fill](3,1) circle [radius=0.05];
\draw[red,fill](4,3) circle [radius=0.05];
\draw[red,fill](3,3) circle [radius=0.05];
\draw[blue](0,2)--(2.5,2);
\draw[blue,fill](1,2) circle [radius=0.05];
\draw[blue,fill](2,2) circle [radius=0.05];
\draw[blue,fill](2.5,2) circle [radius=0.05];

\draw[dashed,->](1,4.968)--(1,2.035);
\draw[dashed,->](2,3.968)--(2,2.035);
\draw[dashed,->](3,2.968)--(2.525,2.032);
\draw[dashed,->](3.018,1.035)--(2.525,1.96);
\draw[dashed,->](3.968,2.968)--(2.528,2.025);
\node at (-0.6,2) {(0,0,2)};
\node at (1,5.3) {(0,1,5)};
\node at (2,4.3) {(0,2,4)};
\node at (3,3.3) {(0,3,3)};
\node at (4.6,3.3) {(0,4,3)};
\node at (3,0.7) {(0,3,1)};
\node at (1.9,1.7) {(0,2,2)};
\node at (0.8,1.7) {(0,1,2)};
\node at (3.3,1.95) {(0,$\frac{5}{2}$,2)};
draw[red,fill=red](0,0) circle [radius=0.5];
\end{tikzpicture}}
\subfigure[]{
\begin{tikzpicture}[scale=0.9]
\draw[->](-1,0)--(5,0);
\draw[->](0,-1)--(0,6);
\draw[red,fill](1,5) circle [radius=0.05];
\draw[red,fill](2,4) circle [radius=0.05];
\draw[red,fill](3,1) circle [radius=0.05];
\draw[red,fill](4,3) circle [radius=0.05];
\draw[red,fill](3,3) circle [radius=0.05];
\draw[blue](0,2)--(4,2);
\draw[blue,fill](1,2) circle [radius=0.05];
\draw[blue,fill](2,2) circle [radius=0.05];
\draw[blue,fill](3,2) circle [radius=0.05];
\draw[blue,fill](4,2) circle [radius=0.05];
\draw[dashed,->](1,4.968)--(1,2.035);
\draw[dashed,->](2,3.968)--(2,2.035);
\draw[dashed,->](3,2.968)--(3,2.032);
\draw[dashed,->](3.028,1.035)--(3.975,1.96);
\draw[dashed,->](4,2.968)--(4,2.025);
\node at (-0.6,2) {(0,0,2)};
\node at (1,5.3) {(0,1,5)};
\node at (2,4.3) {(0,2,4)};
\node at (3,3.3) {(0,3,3)};
\node at (4.6,3.3) {(0,4,3)};
\node at (3,0.7) {(0,3,1)};
\node at (4.7,2) {(0,4,2)};
\node at (1.9,1.7) {(0,2,2)};
\node at (0.8,1.7) {(0,1,2)};
\node at (3,1.7) {(0,3,2)};
draw[red,fill=red](0,0) circle [radius=0.5];
\end{tikzpicture}
}
\caption{{\bf1.} {Points in ${\widetilde X}$ are red. The projection points of ${\widetilde X}$ are blue.}\protect\\
{\bf2.} {In (a), the blue line segment is the tropical convex hull generated by $(0,0,2)$ and $(0,\frac{5}{2},2)$.}\protect\\
{\bf3.} {In (a), blue points are the projection $P_1$ of ${\widetilde X}$. Note that $(0,\frac{5}{2},2)$ is the projection of the Fermat-Weber point $(0,3,3)$ of ${\widetilde X}$, and $(0,2,2)$ is the unique Fermat-Weber point of $P_1$.}\protect\\
{\bf4.} {In (b), the blue line segment is the tropical convex hull generated by $(0,0,2)$ and $(0,4,2)$.}\protect\\
{\bf5.} In (b), blue points are the projection $P_2$ of ${\widetilde X}$. Note that $(0,3,2)$ is the projection of the Fermat-Weber point $(0,3,3)$ of ${\widetilde X}$, which is a Fermat-Weber point of $P_2$.}
\label{fig:NotKeepWithFWP}
\end{figure}
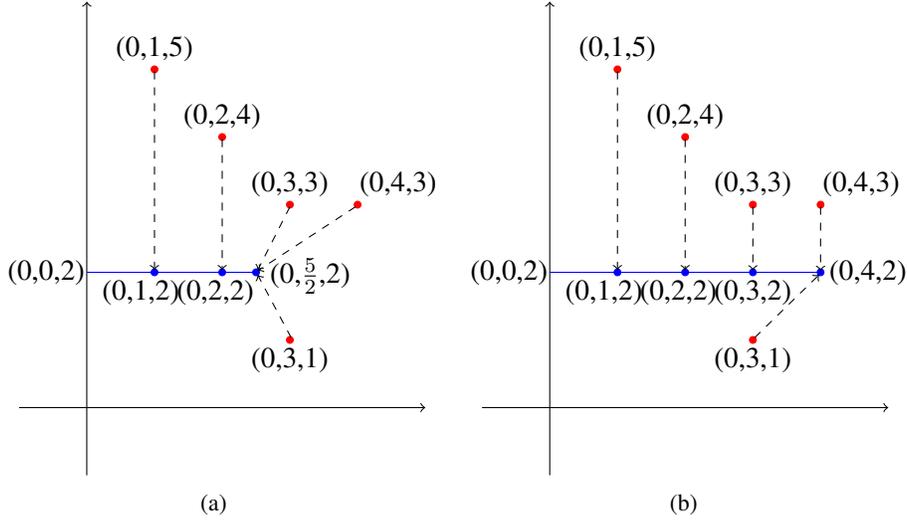

\section{Theorems}\label{sec:projection}
In this section, {we introduce Theorem \ref{theo:algorithmIsRight} and Theorem \ref{lem:cannotBothHold} for proving the correctness of the algorithms developed in the next section.}

\begin{lemma}
\label{lem:projToTriangleBelow(manyPoints)}
Suppose we have a data set $$X=\{{\bf x}^{(1)}=(x^{(1)}_{1},x^{(1)}_{2},\dots,x^{(1)}_{n}),\dots,{\bf x}^{(m)}=(x^{(m)}_{1},x^{(m)}_{2},\dots,x^{(m)}_{n})\}\subset\mathbb{R}^{n}\!/\mathbb{R}\mathbf{1}.$$Let $t$ be a number which is no more than
$$\min\limits_{1\leq k\leq m}\min\limits_{1\leq \ell\leq n}\{x^{(k)}_\ell\}.$$
For any two fixed integers $d_1\;\text{and}\;d_2\;(2\leq d_1<d_2\leq n)$, we define three points ${\bf u}^{(1)},{\bf u}^{(2)},{\bf u}^{(3)}\in\mathbb{R}^{n}\!/\mathbb{R}\mathbf{1}$ as follows.

\begin{align}
&\text{for}\;k=1,2,3,&u^{(k)}_1&:=0,\label{Equ:0}\\
&u^{(1)}_{d_1}:=\min\limits_{1\leq k\leq m}\{x^{(k)}_{d_1}\}-1,&u^{(1)}_{d_2}&:=\min\limits_{1\leq k\leq m}\{x^{(k)}_{d_2}\}-1,\label{Equ:1}\\
&u^{(2)}_{d_1}:=\min\limits_{1\leq k\leq m}\{x^{(k)}_{d_1}\}+1,&u^{(2)}_{d_2}&:=\max\limits_{1\leq k\leq m}\{x^{(k)}_{d_2}\}+1,\label{Equ:2}\\
&u^{(3)}_{d_1}:=\max\limits_{1\leq k\leq m}\{x^{(k)}_{d_1}\}+1,&u^{(3)}_{d_2}&:=\min\limits_{1\leq k\leq m}\{x^{(k)}_{d_2}\}+1,\label{Equ:3}\\
&\text{for}\;k=1,2,3,\;\text{and for all}\;\ell\neq1,d_1,d_2,&u^{(k)}_\ell&:=t.\label{Equ:-1}
\end{align}
Let ${\mathcal C}=tconv(\{{\bf u}^{(1)},{\bf u}^{(2)},{\bf u}^{(3)}\})$. Then, the projection of $X$ on ${\mathcal C}$ is
\begin{equation}
\label{equ:projectionOfX}
\delta_{\mathcal C}({\bf x}^{(k)})=(0,t,\dots,t,x^{(k)}_{d_1},t,\dots,t,x^{(k)}_{d_2},t,\dots,t),\;\text{for all}\;k\in\{1,2,\dots,m\},
\end{equation}
{where $x^{(k)}_{d_1}$ and $x^{(k)}_{d_2}$ are respectively located at the $d_1$-th and $d_2$-th coordinates of $\delta_{\mathcal C}({\bf x}^{(k)})$.}
\end{lemma}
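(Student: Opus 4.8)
The plan is to compute $\delta_{\mathcal C}({\bf x}^{(k)})$ directly from Definition \ref{def:tropicalProj}. Recall that $\delta_{\mathcal C}({\bf x}^{(k)}) = \bigboxplus_{j=1}^{3}\lambda_j\odot{\bf u}^{(j)}$ where $\lambda_j = \min_{1\le \ell\le n}\{x^{(k)}_\ell - u^{(j)}_\ell\}$. So first I would, for a fixed $k$, write out the three scalars $\lambda_1,\lambda_2,\lambda_3$. Because $u^{(j)}_1 = 0$ for all $j$ by \eqref{Equ:0}, and because $t \le \min_{k,\ell}\{x^{(k)}_\ell\}$ forces $x^{(k)}_\ell - t \ge 0$ for every $\ell$ while $x^{(k)}_1 - u^{(j)}_1 = x^{(k)}_1$ which (in our normalization) is $0$, one sees that the coordinates $\ell \ne 1, d_1, d_2$ contribute $x^{(k)}_\ell - t \ge 0$ and the first coordinate contributes $x^{(k)}_1 = 0$; so the minimum defining each $\lambda_j$ is governed by the first coordinate together with the two special coordinates $d_1, d_2$. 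A short case inspection of \eqref{Equ:1}--\eqref{Equ:3} then shows: $\lambda_1 = \min\{0,\ x^{(k)}_{d_1} - u^{(1)}_{d_1},\ x^{(k)}_{d_2} - u^{(1)}_{d_2}\}$, and since $u^{(1)}_{d_1}, u^{(1)}_{d_2}$ are strictly below the respective coordinate minima over $X$, both differences are $\ge 1 > 0$, hence $\lambda_1 = 0$. For $\lambda_2$: $u^{(2)}_{d_2} = \max_k\{x^{(k)}_{d_2}\}+1 > x^{(k)}_{d_2}$, so $x^{(k)}_{d_2} - u^{(2)}_{d_2} \le -1 < 0$, giving $\lambda_2 = \min\{0,\ x^{(k)}_{d_1}-u^{(2)}_{d_1},\ x^{(k)}_{d_2}-u^{(2)}_{d_2}\} = x^{(k)}_{d_2} - u^{(2)}_{d_2}$ (one must also check $x^{(k)}_{d_1} - u^{(2)}_{d_1} \ge x^{(k)}_{d_2} - u^{(2)}_{d_2}$, but the left side is $\ge -1 \cdot(\text{something})$... more carefully: $x^{(k)}_{d_1}-u^{(2)}_{d_1} = x^{(k)}_{d_1} - \min_k\{x^{(k)}_{d_1}\} - 1 \ge -1$, while $x^{(k)}_{d_2}-u^{(2)}_{d_2} \le -1$, so indeed the $d_2$ term is the minimum, but equality cases need a remark). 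Symmetrically $\lambda_3 = x^{(k)}_{d_1} - u^{(3)}_{d_1}$.

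Second, I would substitute these three scalars back into the tropical linear combination and compute it coordinatewise. For the first coordinate: $\max\{\lambda_1 + 0, \lambda_2 + 0, \lambda_3 + 0\} = \max\{0, \lambda_2, \lambda_3\} = 0$ since $\lambda_2, \lambda_3 \le 0$ — good, the normalization survives. For a coordinate $\ell \ne 1, d_1, d_2$: all three of $u^{(j)}_\ell$ equal $t$ by \eqref{Equ:-1}, so the $\ell$-th coordinate is $\max\{\lambda_1, \lambda_2, \lambda_3\} + t = 0 + t = t$, matching \eqref{equ:projectionOfX}. For the coordinate $d_1$: it is $\max\{\lambda_1 + u^{(1)}_{d_1},\ \lambda_2 + u^{(2)}_{d_1},\ \lambda_3 + u^{(3)}_{d_1}\}$; plugging in $\lambda_1 = 0$, $\lambda_3 = x^{(k)}_{d_1} - u^{(3)}_{d_1}$, the third term is exactly $x^{(k)}_{d_1}$, and I would need to check the first two terms are $\le x^{(k)}_{d_1}$: the first is $u^{(1)}_{d_1} = \min_k\{x^{(k)}_{d_1}\} - 1 \le x^{(k)}_{d_1}$, and the second is $\lambda_2 + u^{(2)}_{d_1} = (x^{(k)}_{d_2} - u^{(2)}_{d_2}) + u^{(2)}_{d_1}$, which requires a little arithmetic with \eqref{Equ:2} to bound by $x^{(k)}_{d_1}$ — this is where I expect the only real bookkeeping. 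Symmetrically, the $d_2$-th coordinate evaluates to $x^{(k)}_{d_2}$ via the $\lambda_2$-term dominating.

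The main obstacle, such as it is, is not conceptual but careful verification of the inequalities in the "which term attains the max/min" arguments, especially handling the potential ties (when $x^{(k)}_{d_1}$ equals the coordinate minimum, etc.) and making sure the $\lambda_2 + u^{(2)}_{d_1}$ cross-term genuinely does not exceed $x^{(k)}_{d_1}$ and likewise $\lambda_3 + u^{(3)}_{d_2} \le x^{(k)}_{d_2}$. I would handle these by writing $u^{(2)}_{d_1} = \min_k\{x^{(k)}_{d_1}\}+1$ and $u^{(2)}_{d_2} = \max_k\{x^{(k)}_{d_2}\}+1$ explicitly so that $\lambda_2 + u^{(2)}_{d_1} = x^{(k)}_{d_2} - \max_k\{x^{(k)}_{d_2}\} + \min_k\{x^{(k)}_{d_1}\} \le \min_k\{x^{(k)}_{d_1}\} \le x^{(k)}_{d_1}$, since $x^{(k)}_{d_2} \le \max_k\{x^{(k)}_{d_2}\}$. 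The analogous computation for the $d_2$-coordinate and the $\lambda_3$-term uses \eqref{Equ:3}. Once all coordinatewise maxima are pinned down, formula \eqref{equ:projectionOfX} follows, and since $k$ was arbitrary the lemma holds for all $k \in \{1,\dots,m\}$. I would present the proof for a single fixed $k$ and remark that the argument is uniform in $k$.
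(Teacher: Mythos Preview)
Your approach is correct and is essentially the same as the paper's: compute the scalars $\lambda_1=0$, $\lambda_2=x^{(k)}_{d_2}-u^{(2)}_{d_2}$, $\lambda_3=x^{(k)}_{d_1}-u^{(3)}_{d_1}$ from Definition~\ref{def:tropicalProj} and plug them into \eqref{equ:calculProjection}. The paper merely asserts these values and says ``the conclusion follows,'' whereas you spell out the min/max case analysis and the coordinatewise verification; your extra bookkeeping (including the cross-term bound $\lambda_2+u^{(2)}_{d_1}\le x^{(k)}_{d_1}$) is exactly what is implicitly being used.
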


\begin{proof}
Recall that we assume the first coordinate of every point in $\mathbb{R}^{n}\!/\mathbb{R}\mathbf{1}$ is $0$. For any
$${\bf x}^{(i)}=(x^{(i)}_{1},x^{(i)}_{2},\dots,x^{(i)}_{n})\in X,$$
{by Definition \ref{def:tropicalProj},} we have {that $\lambda_i$ in \eqref{equ:calculProjection} should be:}
$$\lambda_1=0,\;\;\lambda_2=x^{(i)}_{d_2}-\max\limits_{1\leq k\leq m}\{x^{(k)}_{d_2}\}-1,\;\;\lambda_3=x^{(i)}_{d_1}-\max\limits_{1\leq k\leq m}\{x^{(k)}_{d_1}\}-1.$$
Then the conclusion follows from \eqref{equ:calculProjection}.
\end{proof}

{Suppose $X$ is the data set stated in Lemma \ref{lem:projToTriangleBelow(manyPoints)}. For ${\bf u}^{(1)}$, ${\bf u}^{(2)}$ and ${\bf u}^{(3)}$ in Lemma \ref{lem:projToTriangleBelow(manyPoints)}, let ${\mathcal C}=tconv(\{{\bf u}^{(1)},{\bf u}^{(2)},{\bf u}^{(3)}\})$, we have the following remarks:} the equalities \eqref{Equ:1}-\eqref{Equ:3} make sure that the tropical triangle ${\mathcal C}$ is big enough; the equalities \eqref{Equ:0} and \eqref{Equ:-1} make sure that ${\mathcal C}$ parallels with a coordinate plane; the equality \eqref{Equ:-1} makes sure that ${\mathcal C}$ is located under all points in $X$. Lemma \ref{lem:projToTriangleBelow(manyPoints)} shows that we can project $X$ vertically onto ${\mathcal C}$ (see Example \ref{ex:projectionForManyPoints} and Figure \ref{fig:projectionExamp}).

\begin{figure}[H]
\centering
\begin{tikzpicture}[scale=0.8]
\begin{axis}[]
\addplot3[only marks,fill=red,]
coordinates{(2,1,3) (1,1,4) (3,2,3) (3,3,5) (2,3,2)};
\addplot3[fill=green,draw=black]
coordinates{(2,0,-1) (0,0,-1) (0,2,-1) (2,4,-1) (4,4,-1) (4,2,-1) (2,0,-1)};
\addplot3[only marks,mark size=2pt,fill=blue]
coordinates{(2,1,-1) (1,1,-1) (3,2,-1) (3,3,-1) (2,3,-1)};
\addplot3[dashed,->]
coordinates{(2,1,3) (2,1,-0.87)};
\addplot3[dashed,->]
coordinates{(1,1,4) (1,1,-0.87)};
\addplot3[dashed,->]
coordinates{(3,2,3) (3,2,-0.87)};
\addplot3[dashed,->]
coordinates{(3,3,5) (3,3,-0.87)};
\addplot3[dashed,->]
coordinates{(2,3,2) (2,3,-0.87)};
\end{axis}
\end{tikzpicture}
\caption{How data points (red) project onto {the tropical triangle} (green) in Lemma \ref{lem:projToTriangleBelow(manyPoints)}}
\label{fig:projectionExamp}
\end{figure}
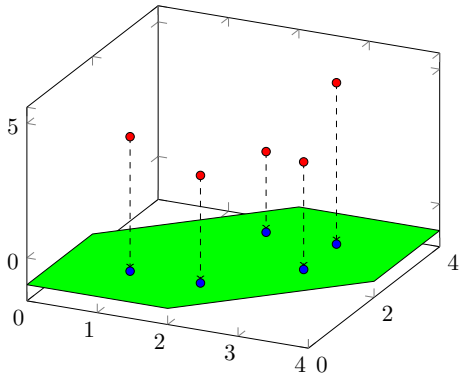

\begin{example}
\label{ex:projectionForManyPoints}
Suppose we have $$X=\{(0,2,1,3),(0,1,1,4),(0,3,2,3),(0,3,3,5),(0,2,3,2)\}\subset\mathbb{R}^{4}\!/\mathbb{R}\mathbf{1}.$$ Let $t=-1$. Fix $d_1=2,\;\text{and}\;d_2=3$. By \eqref{Equ:0}-\eqref{Equ:-1}, we can define three points ${\bf u}^{(1)},{\bf u}^{(2)},\;\text{and}\;{\bf u}^{(3)}$ as:
$${\bf u^{(1)}}=(0,0,0,-1),\;\;{\bf u^{(2)}}=(0,2,4,-1),\;\;{\bf u^{(3)}}=(0,4,2,-1).$$
Let ${\mathcal C}=tconv(\{{\bf u^{(1)}},{\bf u^{(2)}},{\bf u^{(3)}}\})$ ({see the green region in Figure \ref{fig:projectionExamp}}). Then, by Lemma \ref{lem:projToTriangleBelow(manyPoints)}, the projection points of $X$ on ${\mathcal C}$ are shown in Figure \ref{fig:projectionExamp} ({see the blue points}).
\end{example}

\begin{definition}[{\bf Data Matrix}]
We define any matrix $X$ with {$n$ columns} as a {\em data matrix}, where each row of $X$ is regarded as a point in $\mathbb{R}^{n}\!/\mathbb{R}\mathbf{1}$.
\end{definition}

{Below we denote by $X_{m\times n}$ the data matrix $X$ with size $m\times n$.}

\begin{definition}[{\bf Fermat-Weber Points of a Data Matrix}]
For a given data matrix $X_{m\times n}$, {suppose} the $i$-th row of $X$ is ${\bf x}^{(i)}$ ($i\in\{1,\dots,m\}$). We define the {\em Fermat-Weber point of} $X$ as the Fermat-Weber point of $\{{\bf x}^{(1)},\dots,{\bf x}^{(m)}\}$. We {still} denote by $F_X$ the Fermat-Weber point of $X$.
\end{definition}

\begin{definition}[{\bf Projection Matrix}]
\label{def:projectionMatrix}
For a given data matrix $X_{m\times n}$, and for any two fixed integers $d_1\;\text{and}\;d_2\;(2\leq d_1<d_2\leq n)$,  we define the {\em projection matrix} of $X$ ({denoted by $P_{d_1,d_2}(X)$}) as a matrix with size $m\times n$, such that for all $k\in\{1,\dots,m\}$, the $k\text{-}th$ row of $P_{d_1,d_2}(X)$ is
\begin{equation}
\label{equ:def8}
(0,t,\dots,t,x^{(k)}_{d_1},t,\dots,t,x^{(k)}_{d_2},t,\dots,t),
\end{equation}
where
\begin{itemize}
    \item $x^{(k)}_{d_1}$ and $x^{(k)}_{d_2}$ are respectively the $(k,d_1)$-entry and the $(k,d_2)$-entry of $X$, and are respectively located at the $(k,d_1)$-entry and the $(k,d_2)$-entry of of $P_{d_1,d_2}(X)$.
    \item $t$ is a fixed number, such that $t=\min\limits_{1\leq k\leq m}\min\limits_{1\leq \ell\leq n}\{(k,\ell)\text{-entry of}\;X\}$.
\end{itemize}
\end{definition}

Note that the projection matrix $P_{d_1,d_2}(X)$ is still a data matrix.

{Recall the Proposition \ref{prop:LinBo} tells that, if $F_X$ is a Fermat-Weber point of $X=\{{\bf x}^{(1)},\dots,{\bf x}^{(m)}\}\subset\mathbb{R}^{n}\!/\mathbb{R}\mathbf{1}$, then $F_X$ is the unique Fermat-Weber point of $\{F_X,{\bf x}^{(1)},\dots,{\bf x}^{(m)}\}$.}

\begin{theorem}
\label{theo:algorithmIsRight}
Suppose we have a data matrix $X_{(m+1)\times n}$, where the last row of $X$ is a Fermat-Weber point of the matrix made by the first $m$ rows of $X$. We fix two integers $d_1$ and $d_2\;(2\leq d_1<d_2\leq n)$. Let ${\bf r}$ be the last row of $P_{d_1,d_2}(X)$.

If ${\bf r}$ is a Fermat-Weber point of $P_{d_1,d_2}(X)$, and ${\bf u}^{(1)}$, ${\bf u}^{(2)}$ and ${\bf u}^{(3)}$ are defined by \eqref{Equ:0}-\eqref{Equ:-1}, then the projection of {the} Fermat-Weber point of $X$ on $tconv(\{{\bf u}^{(1)},{\bf u}^{(2)},{\bf u}^{(3)}\})$ is a Fermat-Weber point of the projection of $X$ on $tconv(\{{\bf u}^{(1)},{\bf u}^{(2)},{\bf u}^{(3)}\})$.
\end{theorem}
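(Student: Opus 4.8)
The plan is to reduce the statement about projections of Fermat-Weber points to a statement purely about the projected data set, using Lemma~\ref{lem:projToTriangleBelow(manyPoints)} together with the structural facts about Fermat-Weber points from Proposition~\ref{prop:LinBo}. First I would introduce names: write $F_X$ for the last row of $X$, which by hypothesis is a Fermat-Weber point of the first $m$ rows ${\bf x}^{(1)},\dots,{\bf x}^{(m)}$, and set ${\mathcal C}=tconv(\{{\bf u}^{(1)},{\bf u}^{(2)},{\bf u}^{(3)}\})$. By Lemma~\ref{lem:projToTriangleBelow(manyPoints)} applied to the full data set (all $m+1$ rows of $X$), the projection $\delta_{\mathcal C}$ sends each row of $X$ to the corresponding row of $P_{d_1,d_2}(X)$; in particular $\delta_{\mathcal C}(F_X)={\bf r}$, the last row of $P_{d_1,d_2}(X)$. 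So the claim to prove becomes exactly: ${\bf r}$ is a Fermat-Weber point of $P_{d_1,d_2}(X)$ — which is precisely the hypothesis. Wait: this is almost too direct, so the real content must be in checking that the hypothesis of Lemma~\ref{lem:projToTriangleBelow(manyPoints)} is met (i.e., that $t$ chosen from the first $m$ rows is still a valid lower bound once $F_X$ is adjoined) and that ``Fermat-Weber point of $P_{d_1,d_2}(X)$'' as a matrix matches ``Fermat-Weber point of the projection of $X$'' as a set.

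Concretely, the key steps in order are: (1) observe that adjoining $F_X$ does not change the defining min, i.e. $\min_k \min_\ell \{x^{(k)}_\ell\}$ over the first $m$ rows equals (or is $\ge$) the min over all $m+1$ rows — this needs justification, and here is where Proposition~\ref{prop:LinBo}-type reasoning or a direct coordinate estimate on $F_X$ enters, since a Fermat-Weber point's coordinates are sandwiched by the data's coordinates (after normalizing the first coordinate to $0$). Actually the cleaner route: the points ${\bf u}^{(i)}$ and hence ${\mathcal C}$ are defined using only the first $m$ rows, so I must verify $F_X$ projects ``vertically'' onto ${\mathcal C}$ the same way, which requires $t\le$ every coordinate of $F_X$ as well; I would prove this by bounding each coordinate of $F_X$ between the coordinatewise min and max of ${\bf x}^{(1)},\dots,{\bf x}^{(m)}$. (2) Apply Lemma~\ref{lem:projToTriangleBelow(manyPoints)} to conclude $\delta_{\mathcal C}({\bf x}^{(k)})$ is the $k$-th row of $P_{d_1,d_2}(X)$ for $k=1,\dots,m$ and $\delta_{\mathcal C}(F_X)={\bf r}$. (3) Note the set of Fermat-Weber points of $X$ need not be a singleton, but by Proposition~\ref{prop:LinBo} the full set $\{F_X,{\bf x}^{(1)},\dots,{\bf x}^{(m)}\}=X$ (as a set of rows) has $F_X$ as its \emph{unique} Fermat-Weber point, so ``the Fermat-Weber point of $X$'' is unambiguous and equals $F_X$. (4) Its projection is ${\bf r}$, which by hypothesis is a Fermat-Weber point of $P_{d_1,d_2}(X)$, i.e., of the projection of $X$ onto ${\mathcal C}$. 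This closes the argument.

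The step I expect to be the main obstacle is step (1): carefully confirming that the projection formula of Lemma~\ref{lem:projToTriangleBelow(manyPoints)} genuinely applies to $F_X$ with the \emph{same} value of $t$, since $t$ and the ${\bf u}^{(i)}$ were pinned down from the first $m$ rows only. This amounts to showing each coordinate $(F_X)_\ell$ satisfies $t\le (F_X)_\ell$ and, more importantly, that the $\lambda_i$ computed in Definition~\ref{def:tropicalProj} for $F_X$ have the form forced in the proof of Lemma~\ref{lem:projToTriangleBelow(manyPoints)} — which needs the coordinates of $F_X$ in positions $d_1,d_2$ to lie in the intervals $[\min_k x^{(k)}_{d_j},\max_k x^{(k)}_{d_j}]$. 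I would establish this by a short linear-programming / exchange argument on \eqref{equ:FWpointLP}: if some coordinate of a minimizer falls outside the coordinatewise range of the data, truncating it to the nearest endpoint does not increase any $d_{tr}(\cdot,{\bf x}^{(i)})$, so we may assume $F_X$ lies in that box — and then Lemma~\ref{lem:projToTriangleBelow(manyPoints)} applies verbatim. Everything after that is bookkeeping.
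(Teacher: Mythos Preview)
Your steps (2)--(4) are exactly the paper's proof: apply Lemma~\ref{lem:projToTriangleBelow(manyPoints)} to identify the projection of $X$ on ${\mathcal C}$ with $P_{d_1,d_2}(X)$, invoke Proposition~\ref{prop:LinBo} to see that $F_X$ is the unique Fermat-Weber point of the full $(m+1)$-row matrix $X$, observe that ${\bf r}=\delta_{\mathcal C}(F_X)$, and then the hypothesis on ${\bf r}$ finishes it.

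Your step (1), however, is unnecessary and stems from a misreading of the setup. In Theorem~\ref{theo:algorithmIsRight} the data matrix $X$ already has $m+1$ rows, and both the value $t$ in Definition~\ref{def:projectionMatrix} and the points ${\bf u}^{(1)},{\bf u}^{(2)},{\bf u}^{(3)}$ via \eqref{Equ:0}--\eqref{Equ:-1} are computed from \emph{all} $m+1$ rows of $X$ (the symbol $m$ in those formulas is just the row count of whatever matrix is fed in, here $m+1$). Hence Lemma~\ref{lem:projToTriangleBelow(manyPoints)} applies verbatim to the full matrix, $F_X$ included, and no separate coordinate-sandwiching argument is required. The paper's proof is accordingly a two-sentence affair; your proposed truncation/LP exchange argument for bounding the coordinates of $F_X$ is simply not needed (and, incidentally, the coordinatewise sandwich claim for tropical Fermat--Weber points is not obviously true in general, so it is fortunate you do not have to rely on it).
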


\begin{proof}
By Lemma \ref{lem:projToTriangleBelow(manyPoints)} and Definition \ref{def:projectionMatrix} we know that, the projection of $X$ on $${\mathcal C}:=tconv(\{{\bf u}^{(1)},{\bf u}^{(2)},{\bf u}^{(3)}\})$$
is $P_{d_1,d_2}(X)$. {Note that the last row of $X$ is the unique Fermat-Weber point of $X$. Also note that ${\bf r}$ is the projection of the last row of $X$.} Then by the assumption that ${\bf r}$ is a Fermat-Weber point of $P_{d_1,d_2}(X)$ we know that, the projection of {the} Fermat-Weber point of $X$ on ${\mathcal C}$ is a Fermat-Weber point of the projection of $X$ on ${\mathcal C}$.
\end{proof}

\begin{theorem}
\label{lem:cannotBothHold}
Suppose we have a data matrix $X_{m\times n}$. We fix two integers $d_1$ and $d_2\;(2\leq d_1<d_2\leq n)$. Let ${\bf r}$ be a point
$$(0,t,\dots,t,r_{d_1},t,\dots,t,r_{d_2},t,\dots,t),$$
{where $r_{d_1}$ and $r_{d_2}$ are undetermined numbers, and $t$ is the smallest entry of $X$.} Let ${\bf f}$ be a Fermat-Weber point of $P_{d_1,d_2}(X)$.
If
$r_{d_1}=f_{d_1},$ and $r_{d_2}=f_{d_2},$
then ${\bf r}$ is a Fermat-Weber point of $P_{d_1,d_2}(X)$.
\end{theorem}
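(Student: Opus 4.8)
\textit{Proof proposal.} The plan is to show that the point $\mathbf{r}$ attains the same value of the Fermat-Weber objective as the given Fermat-Weber point $\mathbf{f}$. Write $\mathbf{p}^{(1)},\dots,\mathbf{p}^{(m)}$ for the rows of $P_{d_1,d_2}(X)$, so that by Definition \ref{def:FWP} the point $\mathbf{f}$ minimizes $\sum_{k=1}^{m}d_{tr}(\mathbf{y},\mathbf{p}^{(k)})$ over $\mathbf{y}\in\mathbb{R}^{n}\!/\mathbb{R}\mathbf{1}$. It then suffices to prove the pointwise inequality
\begin{equation*}
d_{tr}(\mathbf{r},\mathbf{p}^{(k)})\le d_{tr}(\mathbf{f},\mathbf{p}^{(k)})\qquad\text{for every }k\in\{1,\dots,m\},
\end{equation*}
because summing over $k$ gives $\sum_{k}d_{tr}(\mathbf{r},\mathbf{p}^{(k)})\le\sum_{k}d_{tr}(\mathbf{f},\mathbf{p}^{(k)})$, and the right-hand side is the minimum value of the objective; hence $\mathbf{r}$ is also a minimizer, i.e.\ a Fermat-Weber point of $P_{d_1,d_2}(X)$.

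To establish the pointwise inequality, fix $k$ and recall from \eqref{equ:def8} that $\mathbf{p}^{(k)}$ has first coordinate $0$, $d_1$-th coordinate $x^{(k)}_{d_1}$, $d_2$-th coordinate $x^{(k)}_{d_2}$, and every other coordinate equal to $t$. The point $\mathbf{r}$ has the same shape, with $r_{d_1},r_{d_2}$ in the $d_1$-th and $d_2$-th slots, so $\mathbf{r}-\mathbf{p}^{(k)}$ vanishes in all coordinates except (possibly) the $d_1$-th and $d_2$-th, where it equals $r_{d_1}-x^{(k)}_{d_1}$ and $r_{d_2}-x^{(k)}_{d_2}$. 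Using the formula $d_{tr}(\mathbf{u},\mathbf{v})=\max_i(u_i-v_i)-\min_i(u_i-v_i)$,
\begin{equation*}
d_{tr}(\mathbf{r},\mathbf{p}^{(k)})=\max S_k-\min S_k,\qquad S_k:=\{\,0,\ r_{d_1}-x^{(k)}_{d_1},\ r_{d_2}-x^{(k)}_{d_2}\,\}.
\end{equation*}
For $\mathbf{f}-\mathbf{p}^{(k)}$ the first coordinate is still $0$, and by the hypotheses $r_{d_1}=f_{d_1}$, $r_{d_2}=f_{d_2}$ the $d_1$-th and $d_2$-th coordinates are again $r_{d_1}-x^{(k)}_{d_1}$ and $r_{d_2}-x^{(k)}_{d_2}$; the remaining coordinates are $f_\ell-t$, which need not vanish. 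Thus the set $T_k$ of coordinate values of $\mathbf{f}-\mathbf{p}^{(k)}$ contains $S_k$.

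The final step is the elementary observation that enlarging a finite set of reals can only increase its maximum and decrease its minimum. Applying this to $S_k\subseteq T_k$ gives $\max T_k\ge\max S_k$ and $\min T_k\le\min S_k$, whence
\begin{equation*}
d_{tr}(\mathbf{f},\mathbf{p}^{(k)})=\max T_k-\min T_k\ \ge\ \max S_k-\min S_k=d_{tr}(\mathbf{r},\mathbf{p}^{(k)}),
\end{equation*}
which is the required pointwise inequality. Since $k$ was arbitrary, the reduction in the first paragraph finishes the argument.

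I do not anticipate a genuine obstacle here. The one point worth emphasizing is that $\mathbf{r}$ and $\mathbf{f}$ are in general different points --- they may disagree in every coordinate $\ell\notin\{1,d_1,d_2\}$ --- so the real content of the statement is that those coordinates are irrelevant to the tropical distance from any row of $P_{d_1,d_2}(X)$, and the monotonicity of $\max$ and $\min$ under set inclusion is exactly what makes this precise. (Note that the hypothesis ``$t$ is the smallest entry of $X$'' is not actually used in this argument; only the common shape of the rows of $P_{d_1,d_2}(X)$ and of $\mathbf{r}$ matters.)
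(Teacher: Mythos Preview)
Your proof is correct and follows essentially the same approach as the paper: both compute the tropical distance from each row $\mathbf{p}^{(i)}$ to $\mathbf{r}$ as $\max S_i-\min S_i$ with $S_i=\{0,\,r_{d_1}-x^{(i)}_{d_1},\,r_{d_2}-x^{(i)}_{d_2}\}$, observe that the coordinate differences of $\mathbf{f}-\mathbf{p}^{(i)}$ form a superset of $S_i$ (since $f_{d_1}=r_{d_1}$, $f_{d_2}=r_{d_2}$), and conclude the pointwise inequality $d_{tr}(\mathbf{r},\mathbf{p}^{(i)})\le d_{tr}(\mathbf{f},\mathbf{p}^{(i)})$. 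Your presentation via the set inclusion $S_k\subseteq T_k$ is in fact a bit cleaner than the paper's, and your observation that the hypothesis ``$t$ is the smallest entry of $X$'' is not used is accurate.
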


\begin{proof}
By Definition \ref{def:projectionMatrix}, the $i$-th row of $P_{d_1,d_2}(X)$ has the form
$${\bf p^{(i)}}:=(0,t,\dots,t,x^{(i)}_{d_1},t,\dots,t,x^{(i)}_{d_2},t,\dots,t),\;\text{for all}\; i\in\{1,\dots,m\}.$$
Assume that
${\bf f}=(0,f_2,\dots,f_{n})$
is a Fermat-Weber point of $P_{d_1,d_2}(X)$. Suppose there exists $k\in S:=\{1,\dots,n\}\backslash \{1,d_1,d_2\},$ such that $f_{k}\neq t.$
For any $i\in\{1,\dots,m\}$, let
\begin{align*}
&A_i=\min\{0,f_{d_1}-x^{(i)}_{d_1},f_{d_2}-x^{(i)}_{d_2}\},\\
&B_i=\max\{0,f_{d_1}-x^{(i)}_{d_1},f_{d_2}-x^{(i)}_{d_2}\}.
\end{align*}
Then we have
\begin{align*}
\sum\limits_{i=1}^{m}d_{tr}({\bf p}^{(i)},{\bf r})=\sum\limits_{i=1}^{m}{(B_i-A_i)},\;\;
\sum\limits_{i=1}^{m}d_{tr}({\bf p}^{(i)},{\bf f})=\sum\limits_{i=1}^{m}({\max\limits_{k\in S}\{B_i,f_k-t\}-\min\limits_{k\in S}\{A_i,f_k-t\}}).
\end{align*}
It is easy to see that $\sum\limits_{i=1}^{m}d_{tr}({\bf p}^{(i)},{\bf f})\geq\sum\limits_{i=1}^{m}d_{tr}({\bf p}^{(i)},{\bf r}).$ So, by Definition \ref{def:FWP}, ${\bf r}$ is a Fermat-Weber point of $P_{d_1,d_2}(X).$
\end{proof}

\section{algorithms}\label{sec:algorithm}
In this section, we develop Algorithm \ref{algo:MofE} and Algorithm \ref{algo:NewMethod}, such that for a given data set $X\subset\mathbb{R}^{n}\!/\mathbb{R}\mathbf{1}$, these two algorithms output a tropical triangle ${\mathcal C}$, on which the projection of a Fermat-Weber point of $X$ is a Fermat-Weber point of the projection of $X$.

The input of Algorithm \ref{algo:MofE} and Algorithm \ref{algo:NewMethod} is a data set $$\{{\bf x}^{(1)},\dots,{\bf x}^{(m)}\}\subset\mathbb{R}^{n}\!/\mathbb{R}\mathbf{1}.$$Algorithm \ref{algo:MofE} and Algorithm \ref{algo:NewMethod} output three points
$${\bf u}^{(1)},{\bf u}^{(2)},{\bf u}^{(3)}\in\mathbb{R}^{n}\!/\mathbb{R}\mathbf{1},$$
such that the projection of a Fermat-Weber point of $\{{\bf x}^{(1)},\dots,{\bf x}^{(m)}\}$ on
$${\mathcal C}:=tconv(\{{\bf u}^{(1)},{\bf u}^{(2)},{\bf u}^{(3)}\})$$
is a Fermat-Weber point of the projection of $\{{\bf x}^{(1)},\dots,{\bf x}^{(m)}\}$ on ${\mathcal C}$.

There are two main steps in each algorithm as follows.

\noindent
{\bf Step 1.}\;We define a data matrix $X$, such that for all $i\in\{1,\dots,m\},$ the $i$-th row of $X$ is ${\bf x}^{(i)}.$ We obtain a Fermat-Weber point $F_X$ by solving the linear programming \eqref{equ:FWpointLP}. We define a matrix ${\widetilde X}$ with size $(m+1)\times n$, such that the last row of ${\widetilde X}$ is $F_X$, and the first $m$ rows of ${\widetilde X}$ {come from} $X$.

\noindent
{\bf Step 2.}\;We traverse all pairs $(d_1,d_2)$ such that $2\leq d_1<d_2\leq n$, {and we} calculate the projection matrix $P_{d_1,d_2}({\widetilde X})$ by Definition \ref{def:projectionMatrix}. Check if the last row of $P_{d_1,d_2}({\widetilde X})$ is a Fermat-Weber point of $P_{d_1,d_2}({\widetilde X})$. If so, {we} calculate the three points ${\bf u}^{(1)},{\bf u}^{(2)}\;\text{and}\;{\bf u}^{(3)}$ by \eqref{Equ:0}-\eqref{Equ:-1} in Lemma \ref{lem:projToTriangleBelow(manyPoints)}, return the output, and terminate. By Theorem \ref{theo:algorithmIsRight} we know that, the projection of a Fermat-Weber point of ${X}$ on ${\mathcal C}=tconv(\{{\bf u}^{(1)},{\bf u}^{(2)},{\bf u}^{(3)}\})$ is a Fermat-Weber point of the projection of ${X}$ on ${\mathcal C}.$ If for all $(d_1,d_2)$, the last row of $P_{d_1,d_2}({\widetilde X})$ is not a Fermat-Weber point of $P_{d_1,d_2}({\widetilde X})$, then return FAIL.

\begin{remark}
\label{rem:SorFsimultaneously}
It is not guaranteed that Algorithm \ref{algo:MofE} and Algorithm \ref{algo:NewMethod} will always succeed ({return the tropical triangle}). {If the algorithms succeed, then by Theorem \ref{theo:algorithmIsRight}, Algorithm \ref{algo:MofE} is correct, and by Theorem \ref{theo:algorithmIsRight} and Theorem \ref{lem:cannotBothHold}, Algorithm \ref{algo:NewMethod} is correct.}

Algorithm \ref{algo:MofE} and Algorithm \ref{algo:NewMethod} always succeed or fail simultaneously. But our experimental results in the next section show that, {Algorithm \ref{algo:MofE} or Algorithm \ref{algo:NewMethod} succeeds with a much higher probability than choosing tropical triangles randomly (see Table \ref{tab:showMyAlgoIsGood} and Table \ref{tab:goodRateForMNchange}). Our experimental results also show that,} if Algorithm \ref{algo:MofE} and Algorithm \ref{algo:NewMethod} succeed, then with the {probability} more than $50\%$, Algorithm \ref{algo:NewMethod} would terminate in less traversal steps than Algorithm \ref{algo:MofE} does (see Figure \ref{fig:VarChangingCompare}).
\end{remark}

Remark that, {the difference between Algorithm \ref{algo:MofE} and Algorithm \ref{algo:NewMethod} is the traversal strategy, i.e., the Step 2. is different.} Below we give more details about Step 2.

Let
\begin{equation}\label{equ:defTheL}
L=\{(2,3), (2,4),\dots, (2,n), (3,4), (3,5),\dots, (3,n),\dots,(n-1,n)\}.
\end{equation}
\begin{itemize}
\item[1.]{In Algorithm \ref{algo:MofE}: Step 2.,we traverse all pairs $(d_1,d_2)$ ($2\leq d_1<d_2\leq n$) {in $L$ one by one, i.e., we traverse the pairs in the lexicographical order.}}

\item[2.]{In Algorithm \ref{algo:NewMethod}: Step 2., {we consider the same $L$ defined in \eqref{equ:defTheL}.} Note that $|L|=\frac{(n-1)(n-2)}{2}$. Let $W$ and $S$ be two empty sets. In the future, we will record in $W$ some indices that will be traversed {in priority}, and record in $S$ the pairs that have been traversed. Let ${\bf u}^{(1)}$, ${\bf u}^{(2)}$ and ${\bf u}^{(3)}$ be null vectors.

     Now we start a loop ({see lines \ref{A2code:beginLoop}-\ref{A2code:endLoop} in Algorithm \ref{algo:NewMethod}}). In this loop, we traverse all pairs in $L$ while $|S|<\frac{(n-1)(n-2)}{2}$, and ${\bf u}^{(1)}$, ${\bf u}^{(2)}$ and ${\bf u}^{(3)}$ are null. For each pair $(d_1,d_2)\in L$, if $(d_1,d_2)\in S$, then we skip the pair. If $(d_1,d_2)\notin S$, then we add the pair into $S$, and calculate the projection matrix $P_{d_1,d_2}({\widetilde X})$ by Definition \ref{def:projectionMatrix}. Let ${\bf r}$ be the last row of $P_{d_1,d_2}({\widetilde X})$. If ${\bf r}$ is a Fermat-Weber point of $P_{d_1,d_2}({\widetilde X})$, then we calculate ${\bf u}^{(1)}$, ${\bf u}^{(2)}$ and ${\bf u}^{(3)}$ by formulas \eqref{Equ:0}-\eqref{Equ:-1} in Lemma \ref{lem:projToTriangleBelow(manyPoints)}, return the output and terminate. By Theorem \ref{theo:algorithmIsRight} we know that, the projection of a Fermat-Weber point of ${X}$ on ${\mathcal C}:=tconv(\{{\bf u}^{(1)},{\bf u}^{(2)},{\bf u}^{(3)}\})$ is a Fermat-Weber point of the projection of ${X}$ on ${\mathcal C}.$ If ${\bf r}$ is not a Fermat-Weber point of $P_{d_1,d_2}({\widetilde X})$, then by Theorem \ref{lem:cannotBothHold}, at most one of the following two equalities holds:
\begin{align}
&r_{d_1}=f_{d_1},\label{equ:judgePause1}\\
&r_{d_2}=f_{d_2}\label{equ:judgePause2},
\end{align}
{where ${\bf f}$ is a Fermat-Weber point of $P_{d_1,d_2}({\widetilde X}).$}
So we have $3$ cases.
 \begin{itemize}
    \item[]{{\bf (Case 1)} If only \eqref{equ:judgePause1} holds, then we add $d_1$ into $W$, and stop doing the traversal of $L$.}
    \item[]{{\bf (Case 2)} If only \eqref{equ:judgePause2} holds, then we add $d_2$ into $W$, and stop doing the traversal of $L$.}
    \item[]{{\bf (Case 3)} If neither \eqref{equ:judgePause1} nor \eqref{equ:judgePause2} holds, then we move on to the next pair in $L$.}
 \end{itemize}
  Now we explain what we do if {\bf(Case 1)} happens ({\bf(Case 2)} is similar). Note that $W$ is nonempty {at this time}, and ${\bf u}^{(1)}$, ${\bf u}^{(2)}$ and ${\bf u}^{(3)}$ are null. For each element $\omega\in W$, we define
 \begin{equation}
 \label{equ:L_omega}
 L_{\omega}=\{(\omega_1,\omega_2)\in L|\omega_1=\omega\;\text{or}\;\omega_2=\omega\}.
 \end{equation}
 We start traversing all pairs in $L_{\omega}$. For each pair $(\omega_1,\omega_2)\in L_{\omega}$, if $(\omega_1,\omega_2)\in S$, then we skip the pair. If $(\omega_1,\omega_2)\notin S$, then we add the pair into $S$, and calculate the projection matrix $P_{\omega_1,\omega_2}({\widetilde X})$ by Definition \ref{def:projectionMatrix}. Let ${\bf r}$ be the last row of $P_{\omega_1,\omega_2}({\widetilde X})$. If ${\bf r}$ is a Fermat-Weber point of $P_{\omega_1,\omega_2}({\widetilde X})$, then calculate ${\bf u}^{(1)}$, ${\bf u}^{(2)}$ and ${\bf u}^{(3)}$ by formulas \eqref{Equ:0}-\eqref{Equ:-1} in Lemma \ref{lem:projToTriangleBelow(manyPoints)}, output ${\bf u}^{(1)}$, ${\bf u}^{(2)}$ and ${\bf u}^{(3)}$, and terminate. If ${\bf r}$ is not a Fermat-Weber point of $P_{\omega_1,\omega_2}({\widetilde X})$, then by Theorem \ref{lem:cannotBothHold}, at most one of the following two equalities holds:
  \begin{align}
    &r_{\omega_1}=f_{\omega_1},\label{equ:judgePause3}\\
    &r_{\omega_2}=f_{\omega_2}\label{equ:judgePause4},
\end{align}
{where ${\bf f}$ is a Fermat-Weber point of $P_{\omega_1,\omega_2}({\widetilde X}).$} So we have $2$ cases.
\begin{itemize}
    \item[]{{\bf (Case 1.1)} If only \eqref{equ:judgePause3} holds, then add $\omega_1$ into $W$.}
    \item[]{{\bf (Case 1.2)} If only \eqref{equ:judgePause4} holds, then add $\omega_2$ into $W$.}
\end{itemize}
  We move on to the next pair in $L_{\omega}$. If for any pair $(\omega_1,\omega_2)\in L$, we have $(\omega_1,\omega_2)\in S$, and the last row of $P_{\omega_1,\omega_2}({\widetilde X})$ is not a Fermat-Weber point of $P_{\omega_1,\omega_2}({\widetilde X})$, then we remove this $\omega$ from $W$. If $W$ becomes empty again, then we continue the traversal of $L$ we paused in {\bf (Case 1)} (Page 10). If $W$ is still nonempty after one element in $W$ has been removed, then for the next $\omega\in W,$ we traverse $L_{\omega}$.}
\end{itemize}

Now we give two examples to better explain how Algorithm \ref{algo:MofE} and Algorithm \ref{algo:NewMethod} work.

\begin{example}
\label{ex:Algorithm1}
This example explains how Algorithm \ref{algo:MofE} works. Suppose we have a {data matrix}
\begin{equation*}
X=\left(
\begin{array}{rrrrr}
0&211&45&-33&10\\
0&-365&23&35&64\\
0&-40&-59&63&14\\
0&65&257&39&-35\\
0&13&5&-261&21\\
0&-1&91&355&7\\
0&-644&21&58&36\\
0&59&4&362&15
\end{array}
\right).
\end{equation*}
By running the package {\tt lpSolve} \cite{lpSolve} in {\tt R} to solve the linear programming \eqref{equ:FWpointLP}, we obtain a Fermat-Weber point of $X$, which is
$$F_X=(0,-40,4,89,15).$$
Define a matrix ${\widetilde X}$ with size $(m+1)\times n$, such that the last row of ${\widetilde X}$ is $F_X$, and the first $m$ rows of ${\widetilde X}$ {come from} $X$. We have
\begin{equation*}
{\widetilde X}=\left(
\begin{array}{rrrrr}
0&211&45&-33&10\\
0&-365&23&35&64\\
0&-40&-59&63&14\\
0&65&257&39&-35\\
0&13&5&-261&21\\
0&-1&91&355&7\\
0&-644&21&58&36\\
0&59&4&362&15\\
0&-40&4&89&15
\end{array}
\right).
\end{equation*}
Now we start traversing all pairs $(d_1,d_2)(2\leq d_1<d_2\leq 5)$ in $L$, {where $$L=\{(2,3),(2,4),(2,5),(3,4),(3,5),(4,5)\}.$$}

{\bf 1.} The first pair is $(2,3)$. Note that by Definition \ref{def:projectionMatrix} we have
\begin{equation*}
    P_{2,3}({\widetilde X})=\left(
    \begin{array}{rrrrr}
    0&211&45&-644&-644\\
    0&-365&23&-644&-644\\
    0&-40&-59&-644&-644\\
    0&65&257&-644&-644\\
    0&13&5&-644&-644\\
    0&-1&91&-644&-644\\
    0&-644&21&-644&-644\\
    0&59&4&-644&-644\\
    0&-40&4&-644&-644
    \end{array}
    \right).
\end{equation*}
{And we can compute a Fermat-Weber point of $P_{2,3}({\widetilde X})$: $F_{P_{2,3}({\widetilde X})}=(0,-23,21,-644,-644).$}
The last row of $P_{2,3}({\widetilde X})$ is ${{\bf r}=}(0,-40,4,-644,-644)$. By {Definition \ref{def:FWP}} we can check that, ${\bf r}$ is not a Fermat-Weber point of $P_{2,3}({\widetilde X})$. We move on to the next pair.

{\bf 2.} Similarly, we pass $(2,4)$, $(2,5)$ and $(3,4)$. {For the pair} $(3,5)$, note that
\begin{equation*}
    P_{3,5}({\widetilde X})=\left(
    \begin{array}{rrrrr}
    0&-644&45&-644&10\\
    0&-644&23&-644&64\\
    0&-644&-59&-644&14\\
    0&-644&257&-644&-35\\
    0&-644&5&-644&21\\
    0&-644&91&-644&7\\
    0&-644&21&-644&36\\
    0&-644&4&-644&15\\
    0&-644&4&-644&15
    \end{array}
    \right).
\end{equation*}
And we can compute a Fermat-Weber point of $P_{3,5}({\widetilde X})$: $F_{P_{3,5}({\widetilde X})}=(0,-644,4,-644,15)$, {which is exactly the last row of $P_{3,5}({\widetilde X})$}. By \eqref{Equ:0}-\eqref{Equ:-1} in Lemma \ref{lem:projToTriangleBelow(manyPoints)}, {we make three points}:
\begin{align*}
{\bf u}^{(1)}&=(0,-644,-60,-644,-36),\\
{\bf u}^{(2)}&=(0,-644,-58,-644,65),\\
{\bf u}^{(3)}&=(0,-644,258,-644,-34).
\end{align*}
Then, output ${\bf u}^{(1)},{\bf u}^{(2)},\;\text{and}\;{\bf u}^{(3)}$, and terminate.
\end{example}

\begin{example}
\label{ex:Algorithm2}
This example explains how Algorithm \ref{algo:NewMethod} works. Suppose we have a data matrix
\begin{equation*}
X=\left(
\begin{array}{rrrrr}
0&211&45&-33&10\\
0&-365&23&35&64\\
0&-40&-59&63&14\\
0&65&257&39&-35\\
0&13&5&-261&21\\
0&-1&91&355&7\\
0&-644&21&58&36\\
0&59&4&362&15
\end{array}
\right).
\end{equation*}
By {solving the linear programming \eqref{equ:FWpointLP}}, we obtain a Fermat-Weber point of $X$, which is
$F_X=(0,-40,4,89,15).$
Define a matrix ${\widetilde X}$ with size $(m+1)\times n$, such that the last row of ${\widetilde X}$ is $F_X$, and the first $m$ rows of ${\widetilde X}$ {come from} $X$. We have
\begin{equation*}
{\widetilde X}=\left(
\begin{array}{rrrrr}
0&211&45&-33&10\\
0&-365&23&35&64\\
0&-40&-59&63&14\\
0&65&257&39&-35\\
0&13&5&-261&21\\
0&-1&91&355&7\\
0&-644&21&58&36\\
0&59&4&362&15\\
0&-40&4&89&15
\end{array}
\right).
\end{equation*}
{Let $L$ be a list that contains all pairs $(d_1,d_2)(2\leq d_1<d_2\leq 5)$ in the lexicographical order, that is}
$L=\{(2,3),(2,4),(2,5),(3,4),(3,5),(4,5)\}.$
{Also let $W$ and $S$ be two empty sets. We will record in $W$ some indices that will be traversed in priority, and record in $S$ the pairs that have been traversed.} Now we start the traversal.

{\bf 1.} We first start traversing pairs in $L$. The first pair in $L$ is $(2,3)$. Add $(2,3)$ into $S$. Note that
    \begin{equation*}
    P_{2,3}({\widetilde X})=\left(
    \begin{array}{rrrrr}
    0&211&45&-644&-644\\
    0&-365&23&-644&-644\\
    0&-40&-59&-644&-644\\
    0&65&257&-644&-644\\
    0&13&5&-644&-644\\
    0&-1&91&-644&-644\\
    0&-644&21&-644&-644\\
    0&59&4&-644&-644\\
    0&-40&4&-644&-644
    \end{array}
    \right).
    \end{equation*}
    {And we can compute a Fermat-Weber point of $P_{2,3}({\widetilde X})$: ${\bf f}=(0,-23,21,-644,-644).$}
The last row of $P_{2,3}({\widetilde X})$ is ${\bf r}=(0,-40,4,-644,-644)$. By {Definition \ref{def:FWP}} we can check that, ${\bf r}$ is not a Fermat-Weber point of $P_{2,3}({\widetilde X})$. We have
$r_2=-40\neq -23=f_2,$ and
$r_3=4\neq 21=f_3.$
Now {{\bf (Case 3)}} in page 10 happens, so we move on to the next pair in $L$.

{\bf 2.} The next pair in $L$ is $(2,4)$. Add $(2,4)$ into $S$. Note that
    \begin{equation*}
    P_{2,4}({\widetilde X})=\left(
    \begin{array}{rrrrr}
    0&211&-644&-33&-644\\
    0&-365&-644&35&-644\\
    0&-40&-644&63&-644\\
    0&65&-644&39&-644\\
    0&13&-644&-261&-644\\
    0&-1&-644&355&-644\\
    0&-644&-644&58&-644\\
    0&59&-644&362&-644\\
    0&-40&-644&89&-644
    \end{array}
    \right).
    \end{equation*}
    {And we can compute a Fermat-Weber point of $P_{2,4}({\widetilde X})$: ${\bf f}=(0,-40,-644,63,-644)$.}  The last row of $P_{2,4}({\widetilde X})$ is ${\bf r}=(0,-40,-644,89,-644)$. {By Definition \ref{def:FWP}} we can check that, ${\bf r}$ is not a Fermat-Weber point of $P_{2,4}({\widetilde X})$. We have
$r_2=-40=f_2,$ and $r_4=63\neq 89=f_4.$
Now {{\bf (Case 1)}} in Page 10 happens, so we add $2$ into $W$, and pause the traversal in $L$. Note that, now $W{=\{2\}}$ is nonempty, and the first element in $W$ is $2$. By \eqref{equ:L_omega}, we have
    $L_2=\{(2,3),(2,4),(2,5)\}.$
    We start traversing pairs in $L_2.$

{\bf3.} {Note that now $S=\{(2,3),(2,4)\}.$}  The {first} pair in $L_2$ is $(2,3)$, which is in $S$ already, so we skip it. Similarly we skip $(2,4)$. The {third} pair in $L_2$ is $(2,5)$, which is not in $S$, so we do the following steps. Add $(2,5)$ into $S$. Note that
    \begin{equation*}
    P_{2,5}({\widetilde X})=\left(
    \begin{array}{rrrrr}
    0&211&-644&-644&10\\
    0&-365&-644&-644&64\\
    0&-40&-644&-644&14\\
    0&65&-644&-644&-35\\
    0&13&-644&-644&21\\
    0&-1&-644&-644&7\\
    0&-644&-644&-644&36\\
    0&59&-644&-644&15\\
    0&-40&-644&-644&15
    \end{array}
    \right).
    \end{equation*}
    {And we can compute a Fermat-Weber point of $P_{2,5}({\widetilde X})$: ${\bf f}=(0,-1,-644,-644,15)$.}
    The last row of $P_{2,5}({\widetilde X})$ is ${\bf r}=(0,-40,-644,-644,15)$. By {Definition \ref{def:FWP}} we can check that, ${\bf r}$ is not a Fermat-Weber point of $P_{2,5}({\widetilde X})$. We have
    $r_2=-40\neq-1=f_2,$ and $r_5=15=f_5.$
Now {{\bf (Case 1.2)}} in Page 11 happens, so we add $5$ into $W$, {and now $W=\{2,5\}$}. {Note that $S=\{(2,3),(2,4),(2,5)\}$.} Since for every pair $(\omega_1,\omega_2)\in L_2$, {$(\omega_1,\omega_2)$ is in $S$}, and the last row of $P_{\omega_1,\omega_2}({\widetilde X})$ is not a Fermat-Weber point of $P_{\omega_1,\omega_2}({\widetilde X})$, we remove $2$ from $W$.

{\bf 4.} Note that, now $W=\{5\}$ is nonempty. By \eqref{equ:L_omega}, we have
    $L_5=\{(2,5),(3,5),(4,5)\}.$
    The {first} pair in $L_5$ is $(2,5)$, which is in $S$ already, so we skip it. The {second} pair in $L_5$ is $(3,5)$, which is not in $S$, so we do the following steps. Add $(3,5)$ into $S$. Note that
    \begin{equation*}
    P_{3,5}({\widetilde X})=\left(
    \begin{array}{rrrrr}
    0&-644&45&-644&10\\
    0&-644&23&-644&64\\
    0&-644&-59&-644&14\\
    0&-644&257&-644&-35\\
    0&-644&5&-644&21\\
    0&-644&91&-644&7\\
    0&-644&21&-644&36\\
    0&-644&4&-644&15\\
    0&-644&4&-644&15
    \end{array}
    \right).
    \end{equation*}
   {And we can compute a Fermat-Weber point of $P_{3,5}({\widetilde X})$: ${\bf f}=(0,-644,4,-644,15)$,} which is the last row of $P_{3,5}({\widetilde X})$. By \eqref{Equ:0}-\eqref{Equ:-1} in Lemma \ref{lem:projToTriangleBelow(manyPoints)}, we make three points:
    ${\bf u}^{(1)}=(0,-644,-60,-644,-36),$
    ${\bf u}^{(2)}=(0,-644,-58,-644,65),$ and
    ${\bf u}^{(3)}=(0,-644,258,-644,-34).$
   Then, output ${\bf u}^{(1)},{\bf u}^{(2)},\;\text{and}\;{\bf u}^{(3)}$, and terminate.
\end{example}

Below we give the pseudo code of Algorithm \ref{algo:MofE} and Algorithm \ref{algo:NewMethod}.
Note that, Algorithm \ref{algo:obtainTropicalDistanceSum} and Algorithm \ref{algo:obtainFeasibleTriangle} are sub-algorithms of Algorithm \ref{algo:MofE} and Algorithm \ref{algo:NewMethod}. {For a given data matrix $X$,} Algorithm \ref{algo:obtainTropicalDistanceSum} calculates the summation of tropical distance between the last row of $X$ and each row of $X$, and also calculates the summation of tropical distance between a Fermat-Weber point of $X$ and each row of $X$. {We will use Algorithm \ref{algo:obtainTropicalDistanceSum} to check if the last row of $X$ is a Fermat-Weber point of $X$.} Algorithm \ref{algo:obtainFeasibleTriangle} calculates three points ${\bf u}^{(1)}$, ${\bf u}^{(2)}$ and ${\bf u}^{(3)}$ by \eqref{Equ:0}-\eqref{Equ:-1}.
\newpage
\begin{algorithm}[h]
    \caption{}
    \label{algo:MofE}
    \LinesNumbered
    \KwIn{$\{{\bf x}^{(1)},\dots,{\bf x}^{(m)}\}\subset\mathbb{R}^n\!/\mathbb{R}\mathbf{1}$}
    \KwOut{${\bf u}^{(1)},{\bf u}^{(2)},{\bf u}^{(3)}$,
    \par where ${\bf u}^{(1)}$, ${\bf u}^{(2)}$ and ${\bf u}^{(3)}$ are three points in $\mathbb{R}^{n}\!/\mathbb{R}\mathbf{1}$, such that the projection of a Fermat-Weber point of $\{{\bf x}^{(1)},\dots,{\bf x}^{(m)}\}$ on ${\mathcal C}:=tconv(\{{\bf u}^{(1)},{\bf u}^{(2)},{\bf u}^{(3)}\})$ is a Fermat-Weber point of the projection of $\{{\bf x}^{(1)},\dots,{\bf x}^{(m)}\}$ on ${\mathcal C}$}
    \par $X_{m\times n}\leftarrow\{{\bf x}^{(1)},\dots,{\bf x}^{(m)}\}$,\quad $F_X\leftarrow$ a Fermat-Weber point of $X_{m\times n}$\label{A1code:getFWP}
    \par $X_{(m+1)\times n}\leftarrow$ the last row is $F_X$, and the first $m$ rows {come from} $X_{m\times n}$
    \par${\bf u}^{(1)},{\bf u}^{(2)},{\bf u}^{(3)}\leftarrow n$-dimensional null vectors
    \par\For{$d_1\text{ from }2\text{ to }n-1$}{
        \For{$d_2\text{ from }d_1+1\text{ to }n$}{
            \par\If{Verify-FW-Point($P_{d_1,d_2}(X)$)=TRUE}{
                \par${\bf u}^{(1)},{\bf u}^{(2)},{\bf u}^{(3)}\leftarrow$ {Compute-Triangle}($X,d_1,d_2$)
            }

            \par{\bf if} ${\bf u}^{(1)},{\bf u}^{(2)}\;\text{and}\;{\bf u}^{(3)}$ are not null {\bf then} break
        }
    }
    \par{\bf if} ${\bf u}^{(1)},{\bf u}^{(2)}\;\text{and}\;{\bf u}^{(3)}$ are null {\bf then return} FAIL, {\bf otherwise}, {\bf return} ${\bf u}^{(1)},{\bf u}^{(2)},{\bf u}^{(3)}$
\end{algorithm}

\begin{algorithm}[H]
\caption{{Verify-FW-Point}}
\label{algo:obtainTropicalDistanceSum}
\LinesNumbered
\KwIn{Data matrix $X_{m\times n}$}
\KwOut{{TRUE, if the last row of $X$ is a Fermat-Weber point of $X$; FALSE, if the last row of $X$ is not a Fermat-Weber point of $X$}}
    \par${\bf r}\leftarrow$ the last row of $X$, \quad${\bf f}\leftarrow$ a Fermat-Weber point of $X$
    \par$d_{\bf r}\leftarrow\sum\limits_{i=1}^{m}d_{tr}({\bf r},{\bf x}^{(i)})$, \quad$d_{\bf f}\leftarrow\sum\limits_{i=1}^{m}d_{tr}({\bf f},{\bf x}^{(i)})$,\quad {where ${\bf x}^{(i)}$ is the $i$-th row of $X$}
    \par {{\bf if} $d_{\bf r}=d_{\bf f}$ {\bf then return} TRUE, {\bf otherwise}, {\bf return} FALSE}
\end{algorithm}

\begin{algorithm}[H]
\caption{{Compute-Triangle}}
\label{algo:obtainFeasibleTriangle}
\LinesNumbered
\KwIn{$\text{Data matrix}\;X_{m\times n},\text{ and two indices }d_1,d_2$}
\KwOut{${\bf u}^{(1)},{\bf u}^{(2)},{\bf u}^{(3)}$,
    \par where ${\bf u}^{(1)}$, ${\bf u}^{(2)}$ and ${\bf u}^{(3)}$ are {defined by \eqref{Equ:0}-\eqref{Equ:-1}}}
    \par${\bf u}^{(1)},{\bf u}^{(2)},{\bf u}^{(3)}\leftarrow  n$-dimensional null vectors, \quad$X_{min}\leftarrow$ the smallest entry of $X$
    \par {$v^{(S)}_1,v^{(S)}_2\leftarrow$ the smallest coordinates in the $d_1$-th and $d_2$-th columns of $X$ respectively}
    \par {$v^{(L)}_1,v^{(L)}_2\leftarrow$ the largest coordinates in the $d_1$-th and $d_2$-th columns of $X$ respectively}
    \par$u^{(i)}_1\leftarrow0$ for $i=1,2,3$
    \par$u^{(1)}_{d_1}\leftarrow v^{(S)}_{1}-1$, \quad$u^{(1)}_{d_2}\leftarrow v^{(S)}_{2}-1$, \quad$u^{(2)}_{d_1}\leftarrow v^{(S)}_{1}+1$, \quad$u^{(2)}_{d_2}\leftarrow v^{(L)}_{2}+1$
    \par$u^{(3)}_{d_1}\leftarrow v^{(L)}_{1}+1$, \quad$u^{(3)}_{d_2}\leftarrow v^{(S)}_{2}+1$, \quad$\text{all other coordinates of }{\bf u}^{(1)},{\bf u}^{(2)},{\bf u}^{(3)}\leftarrow X_{min}$

\par \Return ${\bf u}^{(1)},{\bf u}^{(2)},{\bf u}^{(3)}$
\end{algorithm}

\newpage
\begin{algorithm}[H]
    \caption{}
    \label{algo:NewMethod}
    \LinesNumbered
    \KwIn{$\{{\bf x}^{(1)},\dots,{\bf x}^{(m)}\}\subset\mathbb{R}^n\!/\mathbb{R}\mathbf{1}$}
    \KwOut{${\bf u}^{(1)},{\bf u}^{(2)},{\bf u}^{(3)}$,
    \par where ${\bf u}^{(1)}$, ${\bf u}^{(2)}$ and ${\bf u}^{(3)}$ are three points in $\mathbb{R}^{n}\!/\mathbb{R}\mathbf{1}$, such that the projection of a Fermat-Weber point of $\{{\bf x}^{(1)},\dots,{\bf x}^{(m)}\}$ on ${\mathcal C}:=tconv(\{{\bf u}^{(1)},{\bf u}^{(2)},{\bf u}^{(3)}\})$ is a Fermat-Weber point of the projection of $\{{\bf x}^{(1)},\dots,{\bf x}^{(m)}\}$ on ${\mathcal C}$}
    \par $X_{m\times n}\leftarrow\{{\bf x}^{(1)},\dots,{\bf x}^{(m)}\}$, \quad$F_X\leftarrow$ a Fermat-Weber point of $X_{m\times n}$\label{A2code:getFWP}
    \par$X_{(m+1)\times n}\leftarrow$ the last row is $F_X$, and the first $m$ rows {come from} $X_{m\times n}$
    \par $L\leftarrow\text{all pairs of indices } {(d_1,d_2) (2\leq d_1<d_2\leq n)}\text{ in the lexicographical order,} $
    \newline that is: $\{(2,3),(2,4),\dots,(2,n),(3,4),\dots,(3,n),\dots,(n-1,n)\}$
    \par$S\leftarrow\varnothing$ {(we will record in $S$ the pairs that have been traversed)}
    \par$W\leftarrow\varnothing\text{ (we will record in }W\text{ the set of indices that will be traversed in priority)}$
    \par$t\leftarrow0$,\quad ${\bf u}^{(1)},{\bf u}^{(2)},{\bf u}^{(3)}\leftarrow n$-dimensional null vectors
    \par\While{$|S|<\frac{(n-1)(n-2)}{2}$, and ${\bf u}^{(1)},{\bf u}^{(2)}\;\text{and}\;{\bf u}^{(3)}$ are null\label{A2code:beginLoop}}{
        \par $t\leftarrow t+1$
        \par{\bf if} {$L[t]\in S$ (here, $L[t]$ denotes the $i$-th element of $L$)} {\bf then} skip this round
        \par$S\leftarrow S\cup\{L[t]\}$, \quad$(d_1,d_2)\leftarrow L[t]$
        \par${\bf r}\leftarrow$ the $(m+1)$-th row of $P_{d_1,d_2}(X)$, \quad${\bf f}\leftarrow F_{P_{d_1,d_2}(X)}$
            \par\If{{Verify-FW-Point($P_{d_1,d_2}(X)$)=TRUE}}{
                \par${\bf u}^{(1)},{\bf u}^{(2)},{\bf u}^{(3)}\leftarrow$ {Compute-Triangle}($X,d_1,d_2$)
            }
        \par{\bf if} ${\bf u}^{(1)},{\bf u}^{(2)}\;\text{and}\;{\bf u}^{(3)}$ are not null {\bf then} break
        \par\If{$f_{k}=r_{k}$ for $k=d_1\text{ or }d_2$}{
            \par$W\leftarrow W\cup \{k\}$
            \par\While{$W\neq\varnothing$ and ${\bf u}^{(1)},{\bf u}^{(2)}\;\text{and}\;{\bf u}^{(3)}$ are null}{
                \par$\omega\leftarrow W[1]$ ({here, $W[1]$ denotes} the first element in $W$)
                \par$L_{\omega}\leftarrow\{(\omega_1,\omega_2)\in L|\omega=\omega_1\;\text{or}\;\omega=\omega_2\}$
                \par\For{all $(\omega_1,\omega_2)\in L_{\omega}$ such that $(\omega_1,\omega_2)\notin S$}{
                    \par$S\leftarrow S\cup\{(\omega_1,\omega_2)\}$, \quad${\bf r}\leftarrow$ the $(m+1)$-th row of $P_{\omega_1,\omega_2}(X)$
                    \par${\bf f}\leftarrow F_{P_{\omega_1,\omega_2}(X)}$
                    \par\If{{Verify-FW-Point($P_{\omega_1,\omega_2}(X)$)=TRUE}}{
                \par${\bf u}^{(1)},{\bf u}^{(2)},{\bf u}^{(3)}\leftarrow$ {Compute-Triangle}($X,\omega_1,\omega_2$)
            }
                    \par{\bf if} ${\bf u}^{(1)},{\bf u}^{(2)}\;\text{and}\;{\bf u}^{(3)}$ is not null {\bf then} break
                    \par{\bf if} $f_{j}=r_{j}$ for $j=\omega_1\text{ or }\omega_2$, and $j\notin W$ {\bf then} $W\leftarrow W\cup \{j\}$

                }
                \par$W\leftarrow W\backslash W[1]$\label{A2code:endLoop}
            }
        }
    }
    \par{\bf if} ${\bf u}^{(1)},{\bf u}^{(2)}\;\text{and}\;{\bf u}^{(3)}$ are null {\bf then return} FAIL, {\bf otherwise}, {\bf return} ${\bf u}^{(1)},{\bf u}^{(2)},{\bf u}^{(3)}$
\end{algorithm}

\newpage
\section{Implementation and Experiment}\label{sec:experiment}
We implement Algorithm \ref{algo:MofE} and Algorithm \ref{algo:NewMethod}, and test how Algorithm \ref{algo:MofE} and Algorithm \ref{algo:NewMethod} {perform}. Data matrices, {\tt R} code and computational results are available online via: \url{https://github.com/DDDVE/the-Projection-of-Fermat-Weber-Points.git}.

\begin{itemize}
\item[]{\bf Software} We implement Algorithm \ref{algo:MofE} and Algorithm \ref{algo:NewMethod} in {\tt R} (version 4.0.4) \cite{R}, where we use the command {\tt lp()} {in the package {\tt lpSolve} \cite{lpSolve}} to implement Line \ref{A1code:getFWP} in Algorithm \ref{algo:MofE} and Line \ref{A2code:getFWP} in Algorithm \ref{algo:NewMethod} for computing a Fermat-Weber point of a data matrix.

In our experiments, we use the command {\tt rmvnorm()} {in the package {\tt Rfast} \cite{Rfast}} to generate data matrices that obey multivariate normal distribution.

\item[]{\bf Hardware and System} We use a 3.6 GHz Intel Core i9-9900K processor (64 GB of RAM) under Windows 10.
\end{itemize}

Now we present four tables and one figure to illustrate how Algorithm \ref{algo:MofE} and Algorithm \ref{algo:NewMethod} perform.

\begin{itemize}
\item[]{\bf1.} {For a fixed data matrix $X_{m\times n}$,} Table \ref{tab:showMyAlgoIsGood} shows the proportion of random tropical triangles, on which the projection of a Fermat-Weber point of $X$ is a Fermat-Weber point of the projection of $X$. From Table \ref{tab:showMyAlgoIsGood} we can see that, for a fixed data matrix $X$, and for random tropical triangles, the ``succeed rate" is low. Here, by ``succeed rate", we mean the proportion of random tropical triangles, on which the projection of a Fermat-Weber point of $X$ is a Fermat-Weber point of the projection of $X$. For instance, the highest proportion is \textcolor{red}{16\%}, and the lowest proportion is even only \textcolor{red}{1\%}. Besides, the succeed rate is extremely low when $m$ and $n$ are both big.

\begin{table}[H]\scriptsize
    \centering
    \begin{tabular}{|c|c|c|c|c|}
    \hline \diagbox{n}{succeed rate}{m} &   30      &   60      &   90   &  120         \\
    \hline 5                            &   \textcolor{red}{16\%}      &   8\%      &   10\%   &    6\%        \\
    \hline 10                           &   4\%      &   9\%      &   8\%   &    5\%        \\
    \hline 15                           &   11\%      &   5\%      &   2\%   &    \textcolor{red}{1\%}        \\
    \hline 20                           &   8\%      &   6\%      &   \textcolor{red}{1\%}   &    \textcolor{red}{1\%}        \\
    \hline
    \end{tabular}
    \caption{{\bf(a)} $m$ represents the number of data points; $n$ represents the dimension of data points. \protect\\
    {\bf(b)} We record the proportion by ``succeed rate". More specifically, for each pair $(m,n)$, we generate one data matrix $X_{m\times n}\sim N({\bf 0},diag(10))$ and $100$ random tropical triangles ${\mathcal C}:=tconv(\{{\bf u}^{(1)},{\bf u}^{(2)},{\bf u}^{(3)}\})$. {Here, for all $i=1,2,3$, we make the first coordinate of ${\bf u}^{(i)}$ as 0, and all other coordinates of ${\bf u}^{(i)}$ obey the uniform distribution on $[-9999,9999]$.} For each triangle ${\mathcal C}$, we test if the projection of a Fermat-Weber point of $X_{m\times n}$ on ${\mathcal C}$ is a Fermat-Weber point of the projection of $X_{m\times n}$ on ${\mathcal C}$.}
    \label{tab:showMyAlgoIsGood}
\end{table}

\item[]{\bf2.} Table \ref{tab:goodRateForMNchange} shows the succeed rate of Algorithm \ref{algo:MofE} or Algorithm \ref{algo:NewMethod} (recall Remark \ref{rem:SorFsimultaneously} tells that, Algorithm \ref{algo:MofE} and Algorithm \ref{algo:NewMethod} always succeed or fail simultaneously). From Table \ref{tab:showMyAlgoIsGood} and Table \ref{tab:goodRateForMNchange} we can see that, the succeed rates recorded in Table \ref{tab:goodRateForMNchange} are much higher than those in Table \ref{tab:showMyAlgoIsGood}. For instance, the lowest rate in Table \ref{tab:goodRateForMNchange} is \textcolor{red}{$34\%$}, which is still higher than the highest rate in Table \ref{tab:showMyAlgoIsGood}, and the highest rate in Table \ref{tab:goodRateForMNchange} is \textcolor{red}{$94\%$}, which is close to $100\%$.
\begin{table}[H]\scriptsize
    \centering
    \begin{tabular}{|c|c|c|c|c|}
        \hline  \diagbox{n}{succeed rate}{m}    &   30      &   60      &   90      &   120     \\
        \hline  5                               &   86\%    &   62\%    &   53\%    &   \textcolor{red}{34\%}    \\
        \hline  10                              &   82\%    &   67\%    &   60\%    &   54\%    \\
        \hline  15                              &   89\%    &   76\%    &   76\%    &   61\%    \\
        \hline  20                              &   \textcolor{red}{94\%}    &   82\%    &   76\%    &   79\%    \\
        \hline
    \end{tabular}
    \caption{{\bf(a)} $m$ represents the number of data points; $n$ represents the dimension of data points.\protect\\
    {\bf(b)} We record the proportion as ``succeed rate". More specifically, for each pair $(m,n)$, we generate $100$ data matrices $X_{m\times n}\sim N({\bf 0},diag(10))$, run Algorithm \ref{algo:MofE} or Algorithm \ref{algo:NewMethod}, and calculate the proportion of that Algorithm \ref{algo:MofE} or Algorithm \ref{algo:NewMethod} succeeds.}
    \label{tab:goodRateForMNchange}
\end{table}

\item[]{\bf3.} {We fix $m=120$, and we fix $n=20$.} Table \ref{tab:GoodRateVchanges} shows how high the succeed rate of Algorithm \ref{algo:MofE} or Algorithm \ref{algo:NewMethod} would be {when we change the data matrix $X_{120\times20}$}. In order to change $X$, we change $v$, such that $X\sim N({\bf 0},diag(v))$. We can see from Table \ref{tab:GoodRateVchanges} that, when $v$ is changing from $1$ to $800$, the succeed rate of Algorithm \ref{algo:MofE} or Algorithm \ref{algo:NewMethod} is still around \textcolor{red}{$70\%$}. Note that $v$ is the variance of each coordinate of data points, which means that, when the coordinate of data points fluctuates violently, the succeed rate of Algorithm \ref{algo:MofE} or Algorithm \ref{algo:NewMethod} is still stable.
\begin{table}[H]\scriptsize
    \centering
    \begin{tabular}{|c|c|c|c|c|c|}
        \hline             v    &   1       &   5           &   10          &   50          &   800     \\
        \hline  succeed rate    &   67\%    &   65\%      &    73\%     &   66\%      &   67\%  \\
        \hline
    \end{tabular}
    \caption{{\bf(a)} $v$ {is a real number} such that $X_{120\times20}\sim N({\bf 0},diag(v))$.\protect\\
    {\bf(b)} We record the proportion as ``succeed rate". More specifically, for each $v$, we generate $100$ random data matrices $X_{120\times20}\sim N({\bf 0},diag(v))$, run Algorithm \ref{algo:MofE} or Algorithm \ref{algo:NewMethod}, and calculate the proportion of that Algorithm \ref{algo:MofE} or Algorithm \ref{algo:NewMethod} succeeds.}
    \label{tab:GoodRateVchanges}
\end{table}

\item[]{\bf4.} Table \ref{tab:runTimeForBoth} shows the average computational time {for} Algorithm \ref{algo:MofE} and {that for} Algorithm \ref{algo:NewMethod}. From Table \ref{tab:runTimeForBoth} we can see that, Algorithm \ref{algo:MofE} and Algorithm \ref{algo:NewMethod} are both efficient. For instance, when there are $120$ data points, and the dimension of each point is $20$, the computational {timings} of Algorithm \ref{algo:MofE} and Algorithm \ref{algo:NewMethod} are still no more than $7$ minutes (\textcolor{red}{373.5734s} and \textcolor{red}{291.9031s}). In addition, in most cases, Algorithm \ref{algo:NewMethod} takes less time than Algorithm \ref{algo:MofE} does. For instance, when $m$ is $120$, and $n$ is $20$, Algorithm \ref{algo:NewMethod} takes around one and a half minutes less than Algorithm \ref{algo:MofE} does.
\begin{table}[H]\scriptsize
    \centering
    \begin{tabular}{|c|c|c|c|c|c|c|c|c|}
    \hline
    \multirow{2}*{\diagbox[height=2.4em,width=5.5em]{n}{time}{m}}&\multicolumn{2}{c|}{30}&\multicolumn{2}{c|}{60}&\multicolumn{2}{c|}{90}&\multicolumn{2}{c|}{120}\\\cline{2-9}
                                        &A1&A4&A1&A4&A1&A4&A1&A4\\
    \hline  5     &0.0549&0.0637&0.1216&0.1289&0.2066&0.2125&0.3376&0.3406\\
    \hline  10    &0.6007&0.5286&2.3137&2.1613&5.3013&4.974&9.6333&8.8836\\
    \hline  15    &3.8845&2.5153&17.5981&14.1034&42.2672&35.6299&84.7549&76.1394\\
    \hline  20    &15.7162&8.9255&96.1014&64.3878&211.1096&174.5119&\textcolor{red}{373.5734}&\textcolor{red}{291.9031}\\
    \hline
    \end{tabular}
    \caption{{\bf(a)} $m$ represents the number of data points; $n$ represents the dimension of data points.\protect\\
    {\bf(b)} We record the average computational time (in seconds) as ``time". More specifically, for each pair $(m,n)$, we run Algorithm \ref{algo:MofE} and Algorithm \ref{algo:NewMethod} for $100$ random data matrices $X_{m\times n}\sim N({\bf 0},diag(10))$, and record the average computational time {for} Algorithm \ref{algo:MofE} and {that for} Algorithm \ref{algo:NewMethod}. \protect\\
    {\bf(c)} ``A1" means the average computational time of Algorithm \ref{algo:MofE}, and ``A4" means the average computational time of Algorithm \ref{algo:NewMethod}.}
    \label{tab:runTimeForBoth}
\end{table}

\item[]{\bf5.} Figure \ref{fig:VarChangingCompare} compares the {numbers of} traversal steps of Algorithm \ref{algo:MofE} and Algorithm \ref{algo:NewMethod}. From Figure \ref{fig:VarChangingCompare} we can see that, with the proportion more than $50\%$, Algorithm \ref{algo:MofE} always takes more traversal steps than Algorithm \ref{algo:NewMethod} does.
\begin{figure}[H]
    \centering
    \subfigure{
    \centering
    \begin{minipage}[]{0.7\linewidth}
    \includegraphics[width=0.5\textwidth, height=0.5\textwidth]{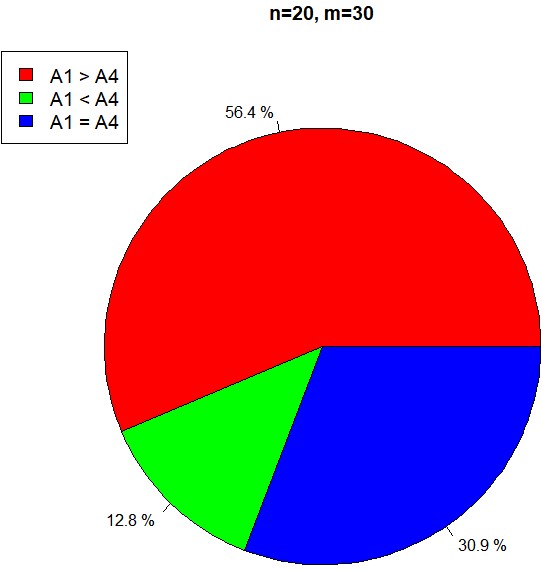}
    \end{minipage}

    \begin{minipage}[]{0.7\linewidth}
    \includegraphics[width=0.5\textwidth, height=0.5\textwidth]{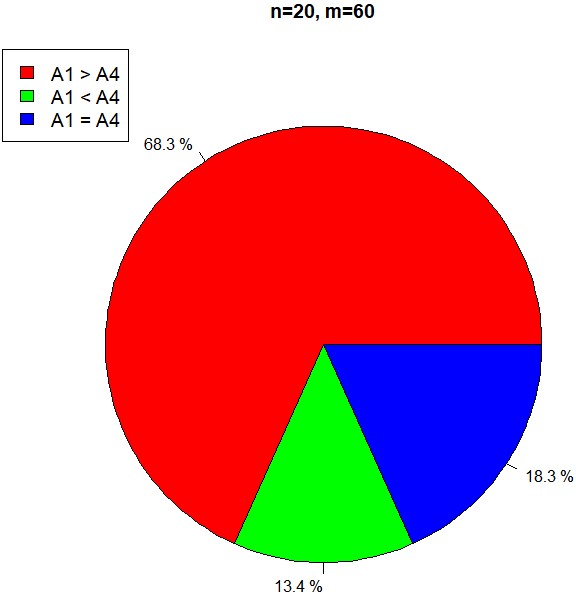}
    \end{minipage}
    }

    \subfigure{
    \centering
    \begin{minipage}[]{0.7\linewidth}
    \includegraphics[width=0.5\textwidth, height=0.5\textwidth]{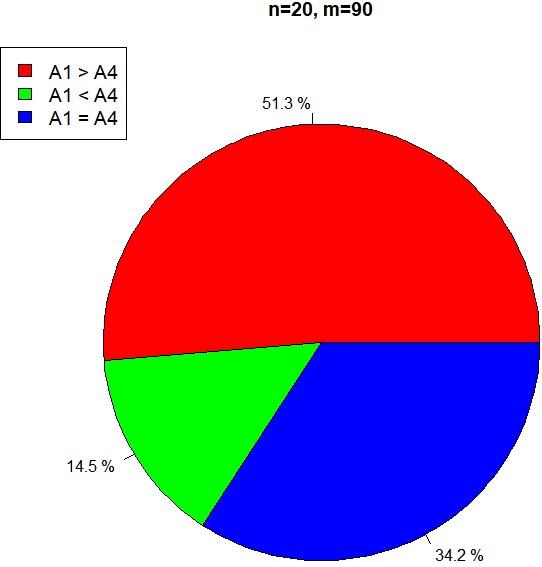}
    \end{minipage}

    \begin{minipage}[]{0.7\linewidth}
    \includegraphics[width=0.5\textwidth, height=0.5\textwidth]{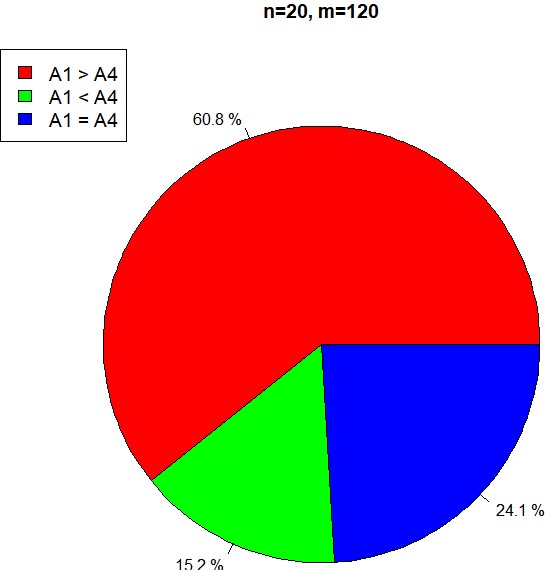}
    \end{minipage}
    }
    \caption{{\bf(a)} $m$ represents the number of data points; $n$ represents the dimension of data points.\protect\\
    {\bf(b)} ``$A1\;>\;A4$" means Algorithm \ref{algo:MofE} takes more steps than Algorithm \ref{algo:NewMethod} does.\protect\\
    {\bf(c)} ``$A1\;<\;A4$" means Algorithm \ref{algo:MofE} takes less steps than Algorithm \ref{algo:NewMethod} does.\protect\\
    {\bf(d)} ``$A1\;=\;A4$" means Algorithm \ref{algo:MofE} takes equal steps to Algorithm \ref{algo:NewMethod}.\protect\\
    {\bf(e)} For each pair $(m,n)$, we run Algorithm \ref{algo:MofE} and Algorithm \ref{algo:NewMethod} with $100$ random data matrices $X_{m\times n}\sim N({\bf 0},diag(10))$. If Algorithm \ref{algo:MofE} and Algorithm \ref{algo:NewMethod} correctly terminate, then record the number of traversal steps that Algorithm \ref{algo:MofE} and Algorithm \ref{algo:NewMethod} respectively take.}
    \label{fig:VarChangingCompare}
\end{figure}
\end{itemize}

\newpage
\bibliographystyle{plain}

\end{document}